\newtheorem{theorem}{Theorem}[section]
\newtheorem{lemma}[theorem]{Lemma}
\newtheorem{proposition}{Proposition}
\theoremstyle{definition}
\newtheorem{remark}{Remark}
\title[Elliptic operators on refined Sobolev scales on vector bundles]{Elliptic operators on refined Sobolev scales \\on vector bundles}
\author[T. Zinchenko]{Tetiana Zinchenko}
\address{Chernihiv National Pedagogical University named after T. Shevchenko, Ukraine}
\email{zinchenkotat@ukr.net}
\subjclass[2010]{Primary 35J48, 58J05; Secondary 46B70, 46E35}
\keywords{Elliptic pseudodifferential operator, vector bundle, Sobolev space, H\"ormander space, interpolation with function parameter, Fredholm property, a priory estimate of solutions, regularity of solutions.}
\begin{document}

\maketitle

 \begin{abstract}
{We introduce a refined Sobolev scale on a vector bundle over a closed infinitely smooth manifold. This scale consists of inner product H\"ormander spaces parametrized  with a real number and a function varying slowly at infinity in the sense of Karamata.  We prove that these spaces are obtained by the interpolation with a function parameter between inner product Sobolev spaces.
An arbitrary classical elliptic pseudodifferential operator acting between vector bundles of the same rank is investigated on this scale. We prove that this operator is bounded and Fredholm on pairs of  appropriate H\"ormander spaces. We also prove that the solutions to the corresponding  elliptic equation satisfy a certain a priori estimate on these spaces. The local regularity of these solutions is investigated  on the refined Sobolev scale. We find new sufficient conditions for the solutions to have continuous derivatives of a given order.}
\end{abstract}

\section{Introduction}

It is well known \cite{Hermander87, Wells} that elliptic differential and pseudodifferential operators on a closed infinitely smooth manifold are Fredholm between appropriate Sobolev spaces. This fundamental property is used in the theory of elliptic differential equations and elliptic boundary-value problems. However, the Sobolev scale is not sufficiently finely calibrated for some mathematical problems (see monographs \cite{Hermander63, Hermander83, Jacob010205, MazyaShaposhnikova09, MikhailetsMurach14, NicolaRodino10, Paneah00, Triebel01}). In this connection, H\"ormander \cite{Hermander63, Hermander83} introduced and investigated a broad class of normed function spaces
$$
\mathcal{B}_{p,\mu}=\bigl\{w\in\mathcal{S}'(\mathbb{R}^{n}): \mu\widehat{w}\in L_{p}(\mathbb{R}^{n})\bigr\}, \quad \|w\|_{\mathcal{B}_{p,\mu}} := \|\mu\widehat{w}\|_{L_{p}(\mathbb{R}^{n})},
$$
where $1\leq p\leq\infty$, $\mu: \mathbb{R}^{n} \to (0, \infty)$ is a weight function, and  $\widehat{w}$ is the Fourier transform of a tempered distribution $w$. H\"ormander applied these spaces to investigation of solvability of partial differential equations given in Euclidean domains and to study of regularity of solutions to these equations.

Nevertheless, the class of all spaces $\mathcal{B}_{p,\mu}$ is too general for applications to differential equations on manifolds and boundary-value problems. Among these spaces, Mikhailets and Murach \cite{MikhailetsMurach05UMJ5, MikhailetsMurach06UMJ2, MikhailetsMurach06UMJ3} selected the class of inner product spaces $H^{s,\varphi}:= \mathcal{B}_{2,\mu}$ parametrized with the function $\mu(\xi)=\langle\xi\rangle^s\varphi(\langle\xi\rangle)$, where $s\in\mathbb{R}$, the function $\varphi: [1,\infty) \to (0,\infty)$ varies slowly at infinity in the sense of Karamata \cite{Karamata30a, Karamata30b}, and $\langle\xi\rangle = (1+|\xi|^2)^{1/2}$. This class is called the refined Sobolev scale. It consists of inner product Sobolev spaces $H^{s} = H^{s,1}$ and is obtained by the interpolation with a function parameter between these spaces.
This interpolation property allowed Mikhailets and Murach \cite{MikhailetsMurach05UMJ5, MikhailetsMurach06UMJ2, MikhailetsMurach06UMJ3, MikhailetsMurach06UMJ11, MikhailetsMurach07UMJ5, Murach07UMJ6, MikhailetsMurach08UMJ4, Murach08MFAT2, MikhailetsMurach12BJMA2} to build the theory of solvability of general elliptic systems and elliptic boundary--value problems on the refined Sobolev scale. Their theory \cite{MikhailetsMurach14} is supplemented in \cite{Murach09UMJ3, MikhailetsMurach13UMJ3, ZinchenkoMurach12UMJ11, MurachZinchenko13MFAT1, ZinchenkoMurach14JMathSci, AnopMurach14MFAT2, AnopMurach14UMJ, AnopKasirenko16MFAT4} for a more extensive class of H\"ormander inner product spaces. The refined Sobolev scale and other classes of H\"ormander spaces are applied to the spectral theory of elliptic differential operators on manifolds \cite[Section~2.3]{MikhailetsMurach14}, theory of interpolation of normed spaces \cite{MikhailetsMurach13UMJ3, MikhailetsMurach15ResMath1}, to some differential-operator equations \cite{IlkivStrap15}, parabolic initial-boundary value problems \cite{LosMurach13MFAT2, Los15UMG5, LosMikhailetsMurach17CPAA, LosMurach17OpenMath}, in mathematical physics \cite{MikhailetsMolyboga09MFAT1, MikhailetsMolyboga11MFAT3}.

However, elliptic operators on vector bundles have not been covered by this theory. These operators have important applications to elliptic boundary problems on vector bundles \cite{Hermander87}, elliptic complexes, spectral theory of elliptic differential operators, and others (see, e.g., \cite{Hermander87, Wells, Agranovich94}).

The goal of this paper is to introduce and investigate the refined Sobolev scale on an arbitrary vector bundle over infinitely smooth closed manifold and to give applications of this scale to general elliptic pseudodifferential operators on vector bundles.

The paper consists of eight sections. Section 1 is Introduction. In Section 2, we introduce the refined Sobolev scale on the vector bundle. Section 3 is devoted to the method of interpolation with a function parameter between Hilbert spaces. This method plays a key role in the paper. Section 4 contains main results concerning properties of the refined Sobolev scale introduced. Section 5 presents main results about properties of elliptic pseudodifferential operators on this scale.  Section 6 contains some auxiliary facts. The main results of the paper formulated in Sections 4 and 5 are proved in Sections 7 and 8 respectively.

\section{The refined Sobolev scale on a vector bundle}\label{section2}

The refined Sobolev scale on $\mathbb{R}^{n}$ and smooth manifolds was introduced and investigated by Mikhalets and Murach \cite{MikhailetsMurach06UMJ3, MikhailetsMurach08MFAT1}. This scale consists of the inner product H\"ormander spaces $H^{s,\varphi}$ with $s\in\mathbb{R}$ and $\varphi\in\mathcal{M}$. Let us give the definition of the function class  $\mathcal{M}$ and the space $H^{s,\varphi}(\mathbb{R}^{n})$. The latter will be a base for our definition of the refined Sobolev scale on vector bundles.

The class $\mathcal{M}$ consists of all Borel measurable functions $\varphi: [1,\infty)\rightarrow(0,\infty)$  that satisfy the following two conditions:
\begin{itemize}
\item [(i)] both functions $\varphi$ and $1/\varphi$ are bounded on each compact interval $[1,b]$ with $1<b<\infty$;
\item [(ii)] the function $\varphi$ varies slowly at infinity in the sense of Karamata \cite{Karamata30a}, i.e.
$$
\lim_{t\rightarrow\infty}\frac{\varphi(\lambda t)}{\varphi(t)}=1  \quad \mbox{for every}\quad \lambda>0.
$$
\end{itemize}

Slowly varying functions are well investigated and play an important role in mathematical analysis and its applications (see monographs \cite{BinghamGoldieTeugels89, GelukHaan87, Seneta76}). A standard example of a function $\varphi \in \mathcal{M}$ is given by  a continuous function  $\varphi: [1,\infty)\rightarrow(0,\infty)$ such that
\begin{equation}\label{f12}
\varphi(t):=(\log t)^{r_{1}}(\log\log t)^{r_{2}}\ldots(\underbrace{\log\ldots\log}_{k} t)^{r_{k}}\quad
\mbox{for}\quad t\gg1,
\end{equation}
where $0\leq k \in \mathbb{Z}$ and $r_1, \ldots, r_k \in  \mathbb{R}$.

The class $\mathcal{M}$ admits the following description (see, e.g., \cite[Section~1.2]{Seneta76}):
$$
\varphi\in\mathcal{M}\;\Longleftrightarrow\;\varphi(t)=\exp\Biggl(\beta(t)+
\int\limits_{1}^{t}\frac{\alpha(\tau)}{\tau}\,d\tau\Biggr)\;\;
\mbox{for}\;\;t\geq1.
$$
Here, $\alpha$ is a continuous function on $[1,\infty)$ such that $\alpha(\tau)\rightarrow0$ as
$\tau\rightarrow\infty$, and $\beta$ is a Borel measurable function on $[1,\infty)$ such that
$\beta(t)\rightarrow l$ as  $t\rightarrow\infty$ for some $l\in \mathbb{R}$.

Let $s\in\mathbb{R}$ and $\varphi\in\mathcal{M}$. By definition, the complex linear space $H^{s,\varphi}(\mathbb{R}^{n})$, with $1\leq n\in\mathbb{Z}$, consists of all distributions $w\in\mathcal{S}'(\mathbb{R}^{n})$ such that their Fourier transform
$\widehat{w}$ is locally Lebesgue integrable over $\mathbb{R}^{n}$ and satisfies the condition
$$
\int\limits_{\mathbb{R}^{n}}
\langle\xi\rangle^{2s}\varphi^{2}(\langle\xi\rangle)\,
|\widehat{w}(\xi)|^{2}\,
d\xi<\infty.
$$
Here, $\mathcal{S}'(\mathbb{R}^{n})$ is the complex linear topological space of all tempered distributions on $\mathbb{R}^{n}$, and   $\langle\xi\rangle = (1+|\xi|^2)^{1/2}$. An inner product in $H^{s,\varphi}(\mathbb{R}^{n})$ is defined by the formula
$$
(w_{1},w_{2})_{s,\varphi;\mathbb{R}^{n}}:=
\int\limits_{\mathbb{R}^{n}}
\langle\xi\rangle^{2s}\varphi^{2}(\langle\xi\rangle)
\,\widehat{w_{1}}(\xi)\,\overline{\widehat{w_{2}}(\xi)}\,d\xi,
$$
with $w_1, w_2 \in H^{s,\varphi}(\mathbb{R}^{n})$. This inner product endows $H^{s,\varphi}(\mathbb{R}^{n})$ with the Hilbert spaces structure and induces the norm
$$
\|w\|_{s,\varphi;\mathbb{R}^{n}}:= (w,w)_{s,\varphi;\mathbb{R}^{n}}^{1/2}.
$$
The space $H^{s,\varphi}(\mathbb{R}^{n})$ is separable with respect to this norm, and the set $C^\infty_0(\mathbb{R}^{n})$ is dense in this space. Here, as usual, $C^\infty_0(\mathbb{R}^{n})$ stands for the set of all infinitely differentiable compactly supported functions $w: \mathbb{R}^{n} \rightarrow \mathbb{C}$.

In this paper, we consider complex-valued functions and distributions; hence, all function spaces are supposed to be complex. Besides, we interpret distributions as antilinear functionals on corresponding spaces of test functions.

If $\varphi\equiv1$, then the space $H^{s,\varphi}(\mathbb{R}^{n})$ coincides with the inner product Sobolev space $H^{s}(\mathbb{R}^{n})$ of order $s$. Generally, we have the continuous embeddings
\begin{equation}\label{2.34}
H^{s+\varepsilon}(\mathbb{R}^{n})\hookrightarrow
H^{s,\varphi}(\mathbb{R}^{n})\hookrightarrow
H^{s-\varepsilon}(\mathbb{R}^{n})\quad\mbox{for any}\;\;\varepsilon>0.
\end{equation}
They show that the function parameter $\varphi$ defines a supplementary  regularity with respect to the main (power) regularity $s$.  Briefly    saying, $\varphi$ refines main regularity $s$. Following \cite{MikhailetsMurach14, MikhailetsMurach13UMJ3}, we call the class of function spaces
\begin{equation}\label{2.36}
\{H^{s,\varphi}(\mathbb{R}^{n}): s\in\mathbb{R},\varphi\in\mathcal{M}\}
\end{equation}
the refined Sobolev scale on $\mathbb{R}^{n}$.

Let $\Gamma$ be a closed (i. e. compact and without boundary) infinitely smooth real manifold
of dimension $n\geq1$.  We suppose that a certain $C^\infty$-density $dx$ is given on $\Gamma$.
Let $\pi: V \rightarrow\Gamma$ be an infinitely smooth complex vector bundle of rank $p\geq1$ on $\Gamma$. Here,
$V$ is the total space of the bundle, $\Gamma$ is the base  space, and $\pi$ is the projector (see, e.g., \cite[Chapter~I, Section~2]{Wells}). Let $C^\infty(\Gamma, V)$ denote the  complex linear space of all infinitely differentiable sections $u:\Gamma\rightarrow V$. Note that $u(x)\in\pi^{-1}(x)$ for every $x\in \Gamma$ and that $\pi^{-1}(x)$ is a complex vector  space of dimension $p$ (this space is called the fiber over $x$).

Let us introduce the H\"ormander space $H^{s,\varphi}(\Gamma,V)$ on this vector bundle.
From the $C^{\infty}$-structure on $\Gamma$, we choose a finite atlas consisting of local charts $\alpha_{j}:\mathbb{R}^{n}\leftrightarrow\Gamma_{j}$ with $j=1,\ldots,\varkappa$. Here, the open sets  $\Gamma_{j}$ form a finite covering of $\Gamma$. We choose these sets so that the local trivialization
$\beta_{j}:\pi^{-1}(\Gamma_{j})\leftrightarrow
\Gamma_{j}\times\mathbb{C}^{p}$ is defined.
We also choose real-valued functions $\chi_{j}\in C^{\infty}(\Gamma)$, $j=1,\ldots,\varkappa$,
that satisfy the condition $\mathrm{supp}\,\chi_{j}\subset\Gamma_{j}$ and form a partition of unity on~$\Gamma$.

Let $s\in\mathbb{R}$ and $\varphi\in\mathcal{M}$. We introduce the norm on $C^{\infty}(\Gamma, V)$ by the formula
\begin{equation}\label{1.34}
\|u\|_{s,\varphi;\Gamma, V} := \biggl(\,\sum_{j=1}^{\varkappa}\sum_{k=1}^{p} \|(\Pi_k(\beta_j \circ(\chi_j u) \circ\alpha_j)\|^{2}_{s,\varphi;\mathbb{R}^n}\biggr)^{1/2}.
\end{equation}
Here, $u\in C^{\infty}(\Gamma, V)$, and the projector $\Pi_k$ is defined as follows: $\Pi_k: (x,a)\mapsto a_k$ for all $x\in\Gamma$ and $a=(a_1,\ldots,a_p)\in\mathbb{C}^{p}$. We put
\begin{equation}\label{1.134}
u_{j,k}:=\Pi_k(\beta_j \circ(\chi_j u) \circ\alpha_j)
\end{equation}
for  arbitrary $j \in \{ 1, \ldots\varkappa\}$ and $k\in \{1,\ldots, p\}$. Note that if $u\in C^{\infty}(\Gamma, V)$, then each $u_{j,k} \in C^\infty_0(\mathbb{R}^n)$; hence, the norms on the right-hand side of \eqref{1.34} are well defined. The norm  \eqref{1.34} is Hilbert because it is induced by the inner product
\begin{equation}\label{1.35}
(u, v)_{s,\varphi;\Gamma, V} := \sum_{j=1}^{\varkappa}\sum_{k=1}^{p} (u_{j,k}, v_{j,k})_{s,\varphi;\mathbb{R}^n}
\end{equation}
of sections $u, v \in C^{\infty}(\Gamma, V)$.

Let $H^{s,\varphi}(\Gamma, V)$ be the completion of the linear space $C^\infty(\Gamma, V)$  with respect to the norm \eqref{1.34} (and the corresponding inner product \eqref{1.35}). Thus, we have the Hilbert space $H^{s,\varphi}(\Gamma, V)$. This space does not depend up to equivalent of norms on our choice of the atlas $\{\alpha_j\}$, partition of unity $\{\chi_j\}$, and local trivializations $\{\beta_j\}$. This will be proved bellow as Theorem~\ref{t1}. By analogy with \eqref{2.36} we call the class of Hilbert function spaces
\begin{equation}\label{1.36}
\{H^{s,\varphi}(\Gamma, V): s\in\mathbb{R},\varphi\in\mathcal{M}\}
\end{equation}
the refined Sobolev scale on the bundle $\pi:V\to\Gamma$.

If $\varphi=1$, then $H^{s,\varphi}(\Gamma, V)$ becomes the inner product Sobolev space  $H^{s}(\Gamma, V)$ of order $s\in\mathbb{R}$ (see e.g. \cite[Chapter~IV, Section~1]{Wells}). In the Sobolev case of $\varphi = 1$, we will omit the index $\varphi$ in our designations concerning the H\"ormander spaces $H^{s,\varphi}(\cdot)$. Specifically, $\|\cdot\|_{s;\Gamma, V}$ denotes the norm in the Sobolev space $H^{s}(\Gamma, V)$.

In the case of trivial vector bundle of rank $p=1$, the space  $H^{s,\varphi}(\Gamma, V)$ consists of distributions on $\Gamma$ and is denoted by $H^{s,\varphi}(\Gamma)$. The space $H^{s,\varphi}(\Gamma)$ was introduced and investigated by Mikhailets and Murach \cite{MikhailetsMurach06UMJ3, MikhailetsMurach08MFAT1}.

\section{Interpolation with function parameter between Hilbert spaces}

The refined Sobolev scale on the vector bundle $\pi:V\to\Gamma$ possesses an important interpolation property. Namely, every space $H^{s,\varphi}(\Gamma, V)$, with $s\in\mathbb{R}$ and $\varphi\in\mathcal{M}$,
is the result of the interpolation with an appropriate function parameter between
the  Sobolev spaces $H^{s-\varepsilon}(\Gamma, V)$ and $H^{s+\delta}(\Gamma, V)$, where $\varepsilon, \delta > 0$. We will systematically use this property in the paper. Therefore we recall the definition of interpolation with
function parameter between Hilbert spaces and discuss some of its properties. We restrict ourselves to the case of separable complex Hilbert spaces and mainly follow monograph \cite[Section 1.1]{MikhailetsMurach14}. Note that the interpolation with function parameter between normed spaces was introduced by Foia\c{s} and Lions \cite{FoiasLions61}, who separately considered the case of Hilbert spaces.

Let $X:=[X_{0},X_{1}]$ be an ordered pair of separable complex Hilbert spaces $X_{0}$ and $X_{1}$ such
that $X_{1}\subset X_{0}$ with the continuous and dense embedding. This pair is said to be admissible.
For $X$ there exists an isometric isomorphism
$J:X_{1}\leftrightarrow X_{0}$ that $J$ is a self-adjoint positive-definite operator in
$X_{0}$ with the domain $X_{1}$. The operator $J$ is uniquely determined by the pair $X$ and is called a generating operator
for this pair.

Let $\mathcal{B}$ denote the set of all Borel measurable
functions $\psi:(0,\infty)\rightarrow(0,\infty)$ that $\psi$ is bounded on every
compact interval $[a,b]$, with $0<a<b<\infty$, and that $1/\psi$ is bounded on every
set $[r,\infty)$, with $r>0$. Given $\psi\in\mathcal{B}$, consider the operator $\psi(J)$ defined as the Borel function $\psi$ of the self-adjoint operator $J$ with the help of Spectral Theorem.  The operator $\psi(J)$ is (generally) unbounded and positive-definite in $X_{0}$. Let $[X_{0},X_{1}]_{\psi}$ or, simply,
$X_{\psi}$ denote the domain of $\psi(J)$ endowed with the inner product
$(u_{1},u_{2})_{X_{\psi}}:=(\psi(J)u_{1},\psi(J)u_{2})_{X_{0}}$ and the
corresponding norm $\|u\|_{X_{\psi}}=\|\psi(J)u\|_{X_{0}}$. The space $X_{\psi}$ is
Hilbert and separable.

A function $\psi\in\mathcal{B}$ is said to be an interpolation parameter if the
following condition is fulfilled for each admissible pairs $X=[X_{0},X_{1}]$ and
$Y=[Y_{0},Y_{1}]$ of Hilbert spaces and for an arbitrary linear mapping $T$ given on
$X_{0}$: if the restriction of $T$ to $X_{j}$ is a bounded operator
$T:X_{j}\rightarrow Y_{j}$ for each $j\in\{0,1\}$, then the restriction of $T$ to
$X_{\psi}$ is also a bounded operator $T:X_{\psi}\rightarrow Y_{\psi}$.

If $\psi$ is an interpolation parameter, then we say that the Hilbert space
$X_{\psi}$ is obtained by the interpolation with the function parameter
$\psi$ between  $X_{0}$ and $X_{1}$ (or of the pair $X$). In this case,
\begin{equation}\label{3.18}
\mbox{the continuous and dense embeddings}\quad X_{1}\hookrightarrow X_{\psi}\hookrightarrow X_{0}
\end{equation}
hold true.

The function $\psi\in\mathcal{B}$ is an interpolation parameter if and only
if $\psi$ is pseudoconcave in a neighborhood of $+\infty$. The latter property means that there exists a concave
function $\psi_{1}:(b,\infty)\rightarrow(0,\infty)$, with $b\gg1$, that both
functions $\psi/\psi_{1}$ and $\psi_{1}/\psi$ are bounded on $(b,\infty)$. This criterion follows from Peetre's \cite{Peetre66, Peetre68} description of all interpolation functions for the weighted Lebesgue spaces (see \cite[Theorem~1.9]{MikhailetsMurach14}). Specifically, every function $\psi\in\mathcal{B}$ of the form $\psi(t)\equiv t^\theta \psi_0(t)$, where $0<\theta< 1$ and $\psi_0$ varies slowly at infinity, is an interpolation parameter.

Let us formulate the above-mentioned interpolation property of the refined Sobolev scale on~$\mathbb{R}^n$ \cite[Theorem~1.14]{MikhailetsMurach14}.

\begin{proposition}\label{p1}
Let a function $\varphi\in\mathcal{M}$ and numbers $\varepsilon, \delta>0$ be given. Put
\begin{equation}\label{3.30}
\psi(t):=
\begin{cases}
\;t^{{\varepsilon}/{(\varepsilon+\delta)}}\,
\varphi(t^{1/{(\varepsilon+\delta)}})&\text{if}\quad t\geq1, \\
\;\varphi(1)&\text{if}\quad0<t<1.
\end{cases}
\end{equation}
Then $\psi\in\mathcal{B}$ is an interpolation parameter, and
\begin{equation*}
H^{s,\varphi}(\mathbb{R}^n)=
[H^{s-\varepsilon}(\mathbb{R}^n),H^{s+\delta}(\mathbb{R}^n)]_{\psi} \quad \mbox{for every} \quad s\in\mathbb{R}
\end{equation*}
with equality of norms.
\end{proposition}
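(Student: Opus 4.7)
My plan is to reduce the proposition to the standard facts about the functional calculus of the Bessel-potential multiplier and the Peetre-type pseudoconcavity criterion that the excerpt already recalls.

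First I would check that $\psi$ belongs to $\mathcal{B}$. Borel measurability and strict positivity are clear from the definition \eqref{3.30}, because $\varphi$ itself is Borel measurable and positive. On any compact $[a,b]\subset(0,\infty)$ the function $\psi$ is bounded because $\varphi$ is bounded on any $[1,b^{1/(\varepsilon+\delta)}]$ by condition (i) in the definition of $\mathcal{M}$. For the condition that $1/\psi$ is bounded on $[r,\infty)$ with $r>0$, I would note that a slowly varying function satisfies $\varphi(t)t^{\eta}\to\infty$ as $t\to\infty$ for every $\eta>0$, so $\psi(t)=t^{\varepsilon/(\varepsilon+\delta)}\varphi(t^{1/(\varepsilon+\delta)})\to\infty$, and then invoke (i) on a fixed compact interval away from~$0$.

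Next I would show $\psi$ is an interpolation parameter. Writing $\theta:=\varepsilon/(\varepsilon+\delta)\in(0,1)$ and $\psi_{0}(t):=\varphi(t^{1/(\varepsilon+\delta)})$ for $t\geq1$, the identity $\psi(t)=t^{\theta}\psi_{0}(t)$ holds on $[1,\infty)$. A direct change of variable gives
\[
\frac{\psi_{0}(\lambda t)}{\psi_{0}(t)}=\frac{\varphi(\lambda^{1/(\varepsilon+\delta)}t^{1/(\varepsilon+\delta)})}{\varphi(t^{1/(\varepsilon+\delta)})}\longrightarrow 1
\]
as $t\to\infty$ for every $\lambda>0$, so $\psi_{0}$ varies slowly at infinity. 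The remark after Peetre's criterion in the excerpt then yields that $\psi$ is an interpolation parameter.

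For the equality of spaces I would exhibit an explicit generating operator $J$ for the pair $[H^{s-\varepsilon}(\mathbb{R}^{n}),H^{s+\delta}(\mathbb{R}^{n})]$, namely the Bessel-potential Fourier multiplier $\widehat{Ju}(\xi):=\langle\xi\rangle^{\varepsilon+\delta}\widehat{u}(\xi)$. A one-line Plancherel computation shows that $J:H^{s+\delta}(\mathbb{R}^{n})\to H^{s-\varepsilon}(\mathbb{R}^{n})$ is an isometric isomorphism and that $J$ is symmetric and positive definite in $H^{s-\varepsilon}(\mathbb{R}^{n})$ with domain $H^{s+\delta}(\mathbb{R}^{n})$; hence by the uniqueness of the generating operator it is the $J$ associated with this pair. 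Because $\langle\xi\rangle^{\varepsilon+\delta}\geq 1$ and $\psi$ is Borel on $[1,\infty)$, the spectral calculus shows that $\psi(J)$ is also a Fourier multiplier, with symbol
\[
\psi(\langle\xi\rangle^{\varepsilon+\delta})=\langle\xi\rangle^{\varepsilon}\,\varphi(\langle\xi\rangle).
\]

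Finally, for any tempered distribution $u$ whose Fourier transform is locally integrable, a direct calculation gives
\[
\|\psi(J)u\|_{s-\varepsilon;\mathbb{R}^{n}}^{2}=\int_{\mathbb{R}^{n}}\langle\xi\rangle^{2(s-\varepsilon)}\langle\xi\rangle^{2\varepsilon}\varphi^{2}(\langle\xi\rangle)\,|\widehat{u}(\xi)|^{2}\,d\xi=\|u\|_{s,\varphi;\mathbb{R}^{n}}^{2},
\]
so the domain of $\psi(J)$ (inside $H^{s-\varepsilon}(\mathbb{R}^{n})$) coincides set-theoretically with $H^{s,\varphi}(\mathbb{R}^{n})$, using the trivial embedding $H^{s,\varphi}(\mathbb{R}^{n})\hookrightarrow H^{s-\varepsilon}(\mathbb{R}^{n})$ that follows from $\langle\xi\rangle^{\varepsilon}\varphi(\langle\xi\rangle)\to\infty$. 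This yields both the equality $H^{s,\varphi}(\mathbb{R}^{n})=[H^{s-\varepsilon}(\mathbb{R}^{n}),H^{s+\delta}(\mathbb{R}^{n})]_{\psi}$ and the equality of norms. I do not anticipate a real obstacle here; the only point that needs any care is matching the exponents in $\psi(\langle\xi\rangle^{\varepsilon+\delta})$ with those in the definition of $H^{s,\varphi}(\mathbb{R}^{n})$, which is purely arithmetic.
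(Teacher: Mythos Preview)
Your argument is correct. Note, however, that the paper does not give its own proof of Proposition~\ref{p1}; it is quoted as a known result from \cite[Theorem~1.14]{MikhailetsMurach14}. The proof you outline---verifying $\psi\in\mathcal{B}$, invoking the $t^{\theta}\psi_{0}(t)$ pseudoconcavity criterion, identifying the generating operator for the Sobolev pair as the Bessel-potential multiplier $\langle\xi\rangle^{\varepsilon+\delta}$, and computing $\psi(J)$ explicitly via functional calculus---is precisely the standard argument and is essentially what appears in that reference.
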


\section{Properties of the refined Sobolev scale on vector bundle}
Let us formulate the main results of the paper concerning properties of the refined Sobolev scale \eqref{1.36} on the vector bundle $\pi:V\to\Gamma$.

\begin{theorem}\label{t6}
Let $\varphi\in\mathcal{M}$ and $\varepsilon, \delta>0$. Define the interpolation parameter $\psi$ by formula \eqref{3.30}.  Then \begin{equation}\label{3.1}
[H^{s-\varepsilon}(\Gamma, V),H^{s+\delta}(\Gamma, V)]_{\psi}=H^{s,\varphi}(\Gamma, V) \quad \mbox{for every} \quad s\in\mathbb{R}
\end{equation}
with equivalence of norms.
\end{theorem}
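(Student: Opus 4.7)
The plan is to deduce Theorem \ref{t6} from Proposition \ref{p1} by realising $H^{s,\varphi}(\Gamma,V)$ as a retract of the orthogonal Hilbert sum $\bigoplus_{j,k}H^{s,\varphi}(\mathbb{R}^n)$ and then applying the interpolation functor. By the very definition \eqref{1.34} of the norm, the flattening map
\begin{equation*}
T:\;u\longmapsto(u_{j,k})_{1\le j\le\varkappa,\,1\le k\le p},\qquad u\in C^\infty(\Gamma,V),
\end{equation*}
with components \eqref{1.134}, extends by continuity to a linear isometry $T:H^{\sigma,\varphi}(\Gamma,V)\to\bigoplus_{j,k}H^{\sigma,\varphi}(\mathbb{R}^n)$ for every $\sigma\in\mathbb R$ and $\varphi\in\mathcal M$, because $\|u\|_{\sigma,\varphi;\Gamma,V}^{2}=\sum_{j,k}\|u_{j,k}\|_{\sigma,\varphi;\mathbb R^n}^{2}$ is exactly the square of the direct-sum norm of $Tu$.

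Next I would construct a bounded left inverse $R$ to $T$. Pick auxiliary functions $\eta_j\in C^\infty(\Gamma)$ with $\mathrm{supp}\,\eta_j\subset\Gamma_j$ and $\eta_j\equiv 1$ on $\mathrm{supp}\,\chi_j$; with $e_1,\dots,e_p$ the standard basis of $\mathbb{C}^p$, set
\begin{equation*}
R\bigl((f_{j,k})_{j,k}\bigr):=\sum_{j=1}^{\varkappa}\eta_j\cdot\beta_j^{-1}\!\Bigl(\,\cdot\,,\;\sum_{k=1}^{p}(f_{j,k}\circ\alpha_j^{-1})\,e_k\Bigr),
\end{equation*}
each summand extended by zero off $\Gamma_j$. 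A direct calculation using $\eta_j\chi_j=\chi_j$ and $\sum_j\chi_j\equiv1$ gives $R\circ T=\mathrm{id}$ on $C^\infty(\Gamma,V)$. Classical facts about Sobolev spaces on manifolds (namely that pull-back by a diffeomorphism of $\mathbb R^n$ and multiplication by a smooth compactly supported cutoff act boundedly in every $H^\sigma$) then show that $R$ is bounded from $\bigoplus_{j,k}H^{\sigma}(\mathbb{R}^n)$ into $H^{\sigma}(\Gamma,V)$ for $\sigma=s-\varepsilon$ and $\sigma=s+\delta$; the identity $R\circ T=\mathrm{id}$ extends to these Sobolev spaces by density.

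Now I would interpolate. Since the generating-operator construction of $[\,\cdot\,,\,\cdot\,]_\psi$ evidently commutes with finite orthogonal sums, Proposition \ref{p1} yields
\begin{equation*}
\Bigl[\bigoplus_{j,k}H^{s-\varepsilon}(\mathbb R^n),\;\bigoplus_{j,k}H^{s+\delta}(\mathbb R^n)\Bigr]_\psi=\bigoplus_{j,k}H^{s,\varphi}(\mathbb R^n)
\end{equation*}
with equality of norms. Since $\psi$ is an interpolation parameter, $T$ and $R$ extend to bounded maps between the interpolated pairs, and the relation $R\circ T=\mathrm{id}$ is inherited on $[H^{s-\varepsilon}(\Gamma,V),H^{s+\delta}(\Gamma,V)]_\psi$. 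Writing $\|\cdot\|_\psi$ for the norm of this interpolation space and combining the bounds $\|Tu\|\le\|u\|_\psi$ (from $T$ being an interpolated map of isometries) and $\|u\|_\psi=\|RTu\|_\psi\le\|R\|\cdot\|Tu\|$ gives $\|u\|_\psi\asymp\|Tu\|=\|u\|_{s,\varphi;\Gamma,V}$ for $u\in H^{s+\delta}(\Gamma,V)$. Because $H^{s+\delta}(\Gamma,V)$ sits densely in both $H^{s,\varphi}(\Gamma,V)$ and in $[H^{s-\varepsilon}(\Gamma,V),H^{s+\delta}(\Gamma,V)]_\psi$, the equivalence of these two complete norms on a common dense subspace forces the two Hilbert spaces to coincide with equivalence of norms, which is \eqref{3.1}.

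The main technical obstacle will be the endpoint boundedness of $R$: one has to chain together the pull-back by $\alpha_j^{-1}$, the vector-valued insertion through $\beta_j^{-1}$, and the cutting off by $\eta_j$, and verify at the level of the bundle-norm \eqref{1.34} that each step is controlled by the $H^\sigma(\mathbb R^n)$-norms of the input components. Each of these ingredients is standard in the scalar Sobolev setting on closed manifolds; the work consists in propagating them through the trivializations $\beta_j$ and the overlap transition functions between local charts, which are smooth and hence act boundedly on all $H^\sigma$. Once this endpoint estimate is in place, the retraction formalism turns the rest of the argument into essentially a formal consequence of Proposition \ref{p1}.
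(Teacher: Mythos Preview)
Your proposal is correct and follows essentially the same approach as the paper: both build a flattening operator $T$ (isometric by the definition of the norm) and a sewing left inverse ($R$ for you, $K$ in the paper) via cutoffs, verify $R\circ T=\mathrm{id}$, establish Sobolev-endpoint boundedness of the sewing map by reducing to standard boundedness of pull-backs and smooth multipliers on $H^\sigma(\mathbb{R}^n)$, and then invoke Proposition~\ref{p1} together with the compatibility of $[\cdot,\cdot]_\psi$ with orthogonal sums (Proposition~\ref{p2} in the paper) to conclude via density. One cosmetic point: your inequality ``$\|Tu\|\le\|u\|_\psi$'' should carry a constant, since the interpolated map of isometries is bounded but not necessarily of norm at most $1$; this does not affect the argument.
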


\begin{theorem}\label{t1}
Let $s\in\mathbb{R}$ and $\varphi\in\mathcal{M}$. The Hilbert space $H^{s,\varphi}(\Gamma, V)$  does not depend up to equivalence of norms on the choice of the atlas $\{\alpha_j\}$ and partition of unity $\{\chi_j\}$ on $\Gamma$ and on the choice of the local trivializations $\beta_j$ of~$V$.
\end{theorem}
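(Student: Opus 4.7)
The plan is to reduce the independence statement on the refined scale to the (classical) independence statement on the Sobolev scale via the interpolation property already recorded in Theorem~\ref{t6}. Fix two sets of admissible data $D_i = (\{\alpha_j^{(i)}\}, \{\chi_j^{(i)}\}, \{\beta_j^{(i)}\})$, $i=1,2$, and write $H^{s,\varphi}_{D_i}(\Gamma,V)$ and $\|\cdot\|^{(i)}_{s,\varphi;\Gamma,V}$ for the Hilbert space and norm that \eqref{1.34}--\eqref{1.35} produce from $D_i$. Since $C^{\infty}(\Gamma,V)$ is dense in each $H^{s,\varphi}_{D_i}(\Gamma,V)$ by construction, it suffices to prove that the two norms are equivalent on the common dense subspace $C^{\infty}(\Gamma,V)$; the completions then coincide as Hilbert spaces up to equivalence of norms.

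The first step is the Sobolev case $\varphi\equiv1$. Here one uses the standard facts that multiplication by a $C^{\infty}_0$ function and the pullback under a $C^{\infty}$ diffeomorphism of $\mathbb{R}^n$ are bounded operators on every $H^{\sigma}(\mathbb{R}^n)$, together with the fact that the transition maps for the trivializations $\beta_j^{(1)}\circ(\beta_j^{(2)})^{-1}$ are $C^{\infty}$ on $\Gamma_j^{(1)}\cap\Gamma_j^{(2)}$ (acting fibrewise as smooth $\mathrm{GL}(p,\mathbb{C})$-valued multipliers). Inserting a common refinement of the two coverings, rewriting $\chi_j^{(1)}u$ in terms of the $\chi_k^{(2)}u$, and transporting everything to $\mathbb{R}^n$ through the charts reduces the comparison $\|u\|^{(1)}_{\sigma;\Gamma,V}\lesssim\|u\|^{(2)}_{\sigma;\Gamma,V}$ to a finite sum of bounded compositions of such multiplier and pullback operators on $H^{\sigma}(\mathbb{R}^n)$. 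The reverse estimate follows by symmetry, so the Sobolev-case norms are equivalent for every $\sigma\in\mathbb{R}$.

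The second step is pure interpolation. Choose any $\varepsilon,\delta>0$ and let $\psi$ be the interpolation parameter \eqref{3.30}. By Theorem~\ref{t6} applied to each data set $D_i$, one has
\begin{equation*}
H^{s,\varphi}_{D_i}(\Gamma,V) = \bigl[H^{s-\varepsilon}_{D_i}(\Gamma,V),\,H^{s+\delta}_{D_i}(\Gamma,V)\bigr]_{\psi}
\end{equation*}
with equivalence of norms. The identity map on $C^{\infty}(\Gamma,V)$ extends, by the Sobolev step, to a bounded operator $H^{s-\varepsilon}_{D_1}(\Gamma,V)\to H^{s-\varepsilon}_{D_2}(\Gamma,V)$ and to a bounded operator $H^{s+\delta}_{D_1}(\Gamma,V)\to H^{s+\delta}_{D_2}(\Gamma,V)$. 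Since $\psi$ is an interpolation parameter, the identity also extends to a bounded operator $H^{s,\varphi}_{D_1}(\Gamma,V)\to H^{s,\varphi}_{D_2}(\Gamma,V)$; swapping $D_1$ and $D_2$ yields the converse bound, which gives the desired equivalence of the two refined norms on $C^{\infty}(\Gamma,V)$.

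The main obstacle is logistical rather than conceptual: one must establish Theorem~\ref{t6} first (for any fixed choice of data) in order to invoke it here, so the proofs in Section~7 have to be sequenced as Theorem~\ref{t6} then Theorem~\ref{t1}. Within the present proof, the only delicate point is the bookkeeping in the Sobolev step, where one has to keep track of how a section supported in one chart/trivialization is expressed through the other chart/trivialization; but this is routine once the smooth transition cocycle of the bundle is used as a matrix-valued multiplier.
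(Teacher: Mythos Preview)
Your proposal is correct and follows essentially the same route as the paper: first invoke the classical Sobolev-case independence (which the paper simply cites from \cite[p.~110]{Wells} rather than sketching), then apply Theorem~\ref{t6} to each data set and interpolate the identity map with the parameter $\psi$ from \eqref{3.30}. Your remark about sequencing (prove Theorem~\ref{t6} before Theorem~\ref{t1}) is exactly what the paper does in Section~\ref{sec7}.
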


\begin{theorem}\label{t8}
Let $s\in\mathbb{R}$ and $\varphi, \varphi_1 \in\mathcal{M}$. The identity mapping $u\mapsto u$, with $u\in C^\infty(\Gamma, V)$, extends uniquely (by continuity) to a compact embedding $H^{s+\varepsilon, \varphi_1}(\Gamma, V)\hookrightarrow H^{s, \varphi}(\Gamma, V)$ for every $\varepsilon>0$.
\end{theorem}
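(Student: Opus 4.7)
The plan is to factor the embedding through an intermediate classical Sobolev space on the bundle where a Rellich-type compactness theorem applies, bracketing the compact step with two continuous embeddings that absorb the function parameters $\varphi_1$ and $\varphi$.

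\textbf{Step 1 (interlacing with the Sobolev scale on the bundle).} I would first establish the analog of \eqref{2.34} on $(\Gamma, V)$: for every $s\in\mathbb{R}$, $\psi\in\mathcal{M}$, and $\varepsilon'>0$, the continuous embeddings
\begin{equation*}
H^{s+\varepsilon'}(\Gamma, V)\hookrightarrow H^{s,\psi}(\Gamma, V)\hookrightarrow H^{s-\varepsilon'}(\Gamma, V)
\end{equation*}
hold. For $u\in C^{\infty}(\Gamma, V)$ the square of the norm \eqref{1.34} is a finite sum of the $H^{s,\psi}(\mathbb{R}^{n})$-norms squared of the compactly supported functions $u_{j,k}\in C^{\infty}_{0}(\mathbb{R}^{n})$; applying \eqref{2.34} termwise transfers the Euclidean interlacing to the bundle, and density of $C^{\infty}(\Gamma, V)$ extends both inequalities to the completions.

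\textbf{Step 2 (Rellich on the bundle).} I would then use the standard compactness result: for $s_{1}>s_{0}$, the embedding $H^{s_{1}}(\Gamma, V)\hookrightarrow H^{s_{0}}(\Gamma, V)$ between Sobolev spaces of sections is compact. This is classical for vector bundles over compact manifolds; it also follows directly from Rellich's theorem on $\mathbb{R}^{n}$ applied componentwise to the $u_{j,k}$, whose supports all lie in the fixed compact set $\alpha_{j}^{-1}(\mathrm{supp}\,\chi_{j})$, combined with a diagonal extraction over the finitely many indices $(j,k)$.

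\textbf{Step 3 (chain).} Specializing Step 1 once with $\psi=\varphi_{1}$, $s$ replaced by $s+\varepsilon$, and $\varepsilon'=\varepsilon/2$, and once with $\psi=\varphi$ and $\varepsilon'=\varepsilon/4$, one obtains
\begin{equation*}
H^{s+\varepsilon,\varphi_{1}}(\Gamma, V)\hookrightarrow H^{s+\varepsilon/2}(\Gamma, V)\hookrightarrow H^{s+\varepsilon/4}(\Gamma, V)\hookrightarrow H^{s,\varphi}(\Gamma, V),
\end{equation*}
where the middle embedding is compact by Step 2 and the two outer ones are continuous. The composition of a compact operator with continuous ones is compact, which yields the asserted compact embedding; uniqueness of the continuous extension follows from density of $C^{\infty}(\Gamma, V)$ in $H^{s+\varepsilon,\varphi_{1}}(\Gamma, V)$.

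\textbf{Expected main obstacle.} The technical core is Step 1: one must ensure that the local pieces $u_{j,k}$ belong to $C^{\infty}_{0}(\mathbb{R}^{n})$ with supports inside a common fixed compact set (forced by the partition-of-unity construction \eqref{1.134}), so that the Euclidean interlacing \eqref{2.34} can be invoked uniformly in $j,k$ and actually delivers a bounded linear map on $C^{\infty}(\Gamma, V)$ in the required direction. This is mainly careful bookkeeping, but it is the place where the passage between $\mathbb{R}^{n}$ and the bundle has real content; once it is in place, Steps~2 and~3 are immediate. Note that an alternative route via the interpolation identity \eqref{3.1} of Theorem~\ref{t6} would yield only continuity, not compactness, without invoking a separate preservation-of-compactness theorem for interpolation — hence the present direct approach.
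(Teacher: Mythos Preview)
Your proof is correct and follows the same overall architecture as the paper's: sandwich a compact Sobolev embedding on the bundle between two continuous embeddings that absorb $\varphi_1$ and $\varphi$, exactly as in your Step~3. The one noteworthy difference is in Step~1: you obtain the continuous outer embeddings by transferring the Euclidean interlacing \eqref{2.34} termwise through the norm definition \eqref{1.34}, whereas the paper derives them from Theorem~\ref{t6} together with the general interpolation property \eqref{3.18} (taking interpolation parameters so that $H^{s+\varepsilon,\varphi_1}(\Gamma,V)=[H^{s+\varepsilon/2},H^{s+2\varepsilon}]_\chi$ and $H^{s,\varphi}(\Gamma,V)=[H^{s-\varepsilon},H^{s+\varepsilon/3}]_\eta$). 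So your closing remark slightly mischaracterizes the interpolation route: the paper does use Theorem~\ref{t6}, but only to produce the \emph{continuous} outer embeddings via \eqref{3.18}, not to transfer compactness through interpolation; the compact middle step is classical Rellich on the bundle, exactly as in your Step~2. Your direct approach is a touch more elementary (it avoids invoking Theorem~\ref{t6}), while the paper's approach keeps everything within the interpolation framework it has already built. Incidentally, your worry in the ``Expected main obstacle'' about uniform compact supports is unnecessary for Step~1, since \eqref{2.34} is a global embedding with a single constant; the compact-support bookkeeping matters only for Step~2.
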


Suppose now that the vector bundle $\pi: V \rightarrow\Gamma$ is Hermitian. Thus, for every $x\in\Gamma$, a certain inner product $\langle \cdot, \cdot \rangle_{x}$ is defined in the fiber $\pi^{-1}(x)$ so that the scalar function $\Gamma\ni x\mapsto\langle u(x), v(x)\rangle_{x}$ is infinitely smooth on $\Gamma$ for arbitrary sections $u,v\in C^\infty(\Gamma, V)$. Using the $C^\infty$-density $dx$ on $\Gamma$, we define the inner product of these sections by the formula
\begin{equation}\label{4.332}
\langle u,v\rangle_{\Gamma, V} := \int\limits_{\Gamma} \langle u(x),v(x)\rangle_{x}\,dx.
\end{equation}

\begin{theorem}\label{t9}
Let $s\in\mathbb{R}$ and $\varphi\in\mathcal{M}$. There exists a number $c=c(s,\varphi)>0$ such that for arbitrary sections $u,v\in C^\infty(\Gamma, V)$ we have the estimate
\begin{equation}\label{4.333}
|\langle u,v\rangle_{\Gamma, V}|\leq c\, \|u\|_{s,\varphi;\Gamma, V}\,\|v\|_{-s,1/\varphi;\Gamma, V}.
\end{equation}
Thus, the form \eqref{4.332}, with $u,v\in C^\infty(\Gamma, V)$, extends by continuity to a sesquilinear form $\langle u,v\rangle_{\Gamma, V}$ defined for arbitrary $u\in H^{s,\varphi}(\Gamma, V)$ and $v\in H^{-s,1/\varphi}(\Gamma, V)$. Moreover,
the spaces $H^{s,\varphi}(\Gamma, V)$ and $H^{-s,1/\varphi}(\Gamma, V)$ are mutually dual (up to equivalence of norms)  with respect to the latter form.
\end{theorem}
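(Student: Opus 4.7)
My plan is to first establish the estimate \eqref{4.333} together with the duality statement in the Sobolev case $\varphi\equiv 1$, and then promote both to the full refined scale by interpolation with function parameter.

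\emph{Sobolev case.} I would pick an auxiliary partition of unity $\{\eta_{j}\}_{j=1}^{\varkappa}$ subordinate to $\{\Gamma_{j}\}$ with $\eta_{j}\equiv 1$ on a neighbourhood of $\operatorname{supp}\chi_{j}$, and decompose
\[
\langle u,v\rangle_{\Gamma,V}=\sum_{j=1}^{\varkappa}\langle\chi_{j}u,\eta_{j}v\rangle_{\Gamma,V}.
\]
Pushed forward through $\alpha_{j}$ and $\beta_{j}$, each summand becomes an integral on $\mathbb{R}^{n}$ of the form $\int_{\mathbb{R}^{n}}\langle H_{j}(x)U_{j}(x),V_{j}(x)\rangle_{\mathbb{C}^{p}}\rho_{j}(x)\,dx$, where $H_{j}\in C^{\infty}(\mathbb{R}^{n},\mathbb{C}^{p\times p})$ represents the fibre inner product in the trivialisation $\beta_{j}$, $\rho_{j}$ is the pulled-back smooth density, and the components of $U_{j},V_{j}\in C_{0}^{\infty}(\mathbb{R}^{n},\mathbb{C}^{p})$ are given by \eqref{1.134} (applied to $\chi_{j}u$ and $\eta_{j}v$ respectively). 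Plancherel together with Cauchy--Schwarz on $\mathbb{R}^{n}$ and the boundedness of multiplication by the smooth, compactly supported matrix $H_{j}\rho_{j}$ in negative-order Sobolev spaces yield
\[
|\langle u,v\rangle_{\Gamma,V}|\leq c\,\|u\|_{s;\Gamma,V}\,\|v\|_{-s;\Gamma,V}.
\]
Nondegeneracy of the form on $C^{\infty}\times C^{\infty}$ combined with density identifies $H^{-s}(\Gamma,V)$ with the dual of $H^{s}(\Gamma,V)$ under this pairing.

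\emph{Interpolation step.} Fix $\varepsilon,\delta>0$. Theorem~\ref{t6} gives
\[
H^{s,\varphi}(\Gamma,V)=[H^{s-\varepsilon}(\Gamma,V),H^{s+\delta}(\Gamma,V)]_{\psi}
\]
with $\psi$ from \eqref{3.30}, and (using $1/\varphi\in\mathcal{M}$) Theorem~\ref{t6} applied to $(-s,1/\varphi)$ with $\delta,\varepsilon$ in place of $\varepsilon,\delta$ gives
\[
H^{-s,1/\varphi}(\Gamma,V)=[H^{-s-\delta}(\Gamma,V),H^{-s+\varepsilon}(\Gamma,V)]_{\widetilde\psi},
\]
where $\widetilde\psi(t)=t^{\delta/(\varepsilon+\delta)}/\varphi(t^{1/(\varepsilon+\delta)})$ for $t\geq 1$; a direct computation yields the algebraic identity $\psi(t)\widetilde\psi(t)=t$. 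For fixed $v\in C^{\infty}(\Gamma,V)$, the antilinear functional $T_{v}(u):=\langle u,v\rangle_{\Gamma,V}$ is, by the Sobolev case, bounded on $H^{s-\varepsilon}(\Gamma,V)$ with norm $\leq c\|v\|_{-s+\varepsilon;\Gamma,V}$ and on $H^{s+\delta}(\Gamma,V)$ with norm $\leq c\|v\|_{-s-\delta;\Gamma,V}$. Invoking the duality theorem for Hilbert-space interpolation with function parameter (cf.\ \cite[Section~1.1]{MikhailetsMurach14})---where the identity $\psi\widetilde\psi=t$ is precisely the condition that makes the Sobolev pivot pairing identify $(H^{s,\varphi}(\Gamma,V))^{*}$ with $H^{-s,1/\varphi}(\Gamma,V)$---$T_{v}$ extends to a bounded antilinear functional on $H^{s,\varphi}(\Gamma,V)$ with operator norm $\leq c\|v\|_{-s,1/\varphi;\Gamma,V}$, which is precisely \eqref{4.333}. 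Continuous extension of the sesquilinear form and mutual duality then follow by density of $C^{\infty}(\Gamma,V)$ in both spaces and the Sobolev-case duality.

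\emph{Main obstacle.} The delicate point is the abstract duality---matching the interpolation parameter $\widetilde\psi$ on the $v$-side with the dual parameter of $\psi$ on the $u$-side so that the Sobolev pivot pairing extends compatibly. The key algebraic input $\psi(t)\widetilde\psi(t)=t$ is elementary, but executing the duality rigorously requires either an explicit generating-operator construction (e.g.\ via a positive elliptic self-adjoint pseudodifferential operator on $V$) or a direct appeal to the Hilbert-space interpolation-duality theorem of \cite{MikhailetsMurach14}.
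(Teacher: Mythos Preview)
Your approach is essentially the paper's: reduce to the Sobolev case (which the paper simply cites from \cite{Wells}, while you sketch it explicitly), then promote to the refined scale via Theorem~\ref{t6} together with the interpolation--duality theorem of \cite{MikhailetsMurach14} (the paper's Proposition~\ref{p5}), using exactly the algebraic identity $\psi(t)\widetilde\psi(t)=t$ you wrote down.

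One presentational point: your framing ``fix $v$, interpolate $T_{v}$, and read off the norm bound $\leq c\|v\|_{-s,1/\varphi}$'' is slightly awkward, because ordinary interpolation of the single operator $T_{v}:H^{s\mp}\to\mathbb{C}$ only controls its norm on $H^{s,\varphi}$ in terms of the two endpoint numbers $\|v\|_{-s+\varepsilon}$ and $\|v\|_{-s-\delta}$, not in terms of $\|v\|_{-s,1/\varphi}$; to get the latter you must already have the duality $(H^{s,\varphi})'\cong H^{-s,1/\varphi}$. The paper avoids this circularity by interpolating the \emph{isomorphism} $Q:v\mapsto\langle\cdot,v\rangle_{\Gamma,V}$ between $H^{s\mp1}$ and $(H^{-s\pm1})'$, then applying Proposition~\ref{p5} once to identify $[(H^{-s+1})',(H^{-s-1})']_{\psi}=(H^{-s,1/\varphi})'$. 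This gives the mutual duality directly, and the estimate \eqref{4.333} then drops out from separate continuity of the extended form. Your ingredients are correct; just reorganize so that Proposition~\ref{p5} is applied to the pair of dual spaces rather than invoked to justify a norm bound on individual functionals.
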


In view of this theorem note that $\varphi\in\mathcal{M}\Leftrightarrow 1/\varphi\in\mathcal{M}$; hence, the space $H^{-s,1/\varphi}(\Gamma, V)$ is well defined.

\begin{theorem}\label{t10}
Let $0\leq q\in \mathbb{Z}$ and $\varphi\in\mathcal{M}$. Then the condition
\begin{equation}\label{4.15}
\int\limits_1^\infty \frac{dt}{t\varphi^2(t)}<\infty
\end{equation}
is equivalent to that the identity mapping $u\mapsto u$, with $u\in C^\infty(\Gamma, V)$, extends uniquely to a continuous  embedding $H^{q+n/2,\varphi}(\Gamma, V) \hookrightarrow C^q(\Gamma, V)$. Moreover, this embedding is compact.
\end{theorem}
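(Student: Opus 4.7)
The plan is to reduce the bundle statement to the classical scalar statement on $\mathbb{R}^{n}$ via the atlas and local trivialisations that enter the definition \eqref{1.34}, then handle sufficiency, necessity and compactness separately. By Theorem~\ref{t1} we may fix any convenient atlas $\{\alpha_{j}\}$ with partition of unity $\{\chi_{j}\}$ and trivialisations $\{\beta_{j}\}$. A section $u\in C^{\infty}(\Gamma,V)$ lies in $C^{q}(\Gamma,V)$ iff each local representative $u_{j,k}$ from \eqref{1.134} lies in $C^{q}_{0}(\mathbb{R}^{n})$, with equivalence of the corresponding norms. Hence the continuous (resp.\ compact) embedding claimed on $(\Gamma,V)$ is equivalent to the continuous (resp.\ compact, up to compact support modulo Arzel\`a--Ascoli) embedding $H^{q+n/2,\varphi}(\mathbb{R}^{n})\hookrightarrow C^{q}_{b}(\mathbb{R}^{n})$ for each component $u_{j,k}$, uniformly in $j,k$.

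For sufficiency of \eqref{4.15} on $\mathbb{R}^{n}$, I would use Fourier inversion: for $|\alpha|\leq q$ and $w\in C^{\infty}_{0}(\mathbb{R}^{n})$,
\[
|D^{\alpha}w(x)|\leq (2\pi)^{-n}\int_{\mathbb{R}^{n}}|\xi|^{|\alpha|}\,|\widehat{w}(\xi)|\,d\xi,
\]
and then Cauchy--Schwarz against the weight $\langle\xi\rangle^{q+n/2}\varphi(\langle\xi\rangle)$ yields
\[
|D^{\alpha}w(x)|\leq C\,\|w\|_{q+n/2,\varphi;\mathbb{R}^{n}}\,\biggl(\int_{\mathbb{R}^{n}}\frac{\langle\xi\rangle^{2|\alpha|-2q-n}}{\varphi^{2}(\langle\xi\rangle)}\,d\xi\biggr)^{1/2}.
\]
Passage to polar coordinates majorises the bracketed integral by a constant multiple of $\int_{1}^{\infty}t^{-1}\varphi^{-2}(t)\,dt$, which is finite by assumption. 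For necessity, for each fixed $|\alpha|\leq q$ and $x_{0}$ the functional $w\mapsto D^{\alpha}w(x_{0})$ must be bounded on $H^{q+n/2,\varphi}(\mathbb{R}^{n})$; computing its Riesz representer on the Fourier side gives $\widehat{F}(\xi)=(i\xi)^{\alpha}\,e^{-ix_{0}\cdot\xi}\langle\xi\rangle^{-2q-n}\varphi^{-2}(\langle\xi\rangle)$, whose squared $H^{q+n/2,\varphi}$-norm coincides (up to constants) with the integral in \eqref{4.15}. Boundedness of the functional therefore forces convergence.

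For compactness, I would pick an auxiliary $\varphi_{0}\in\mathcal{M}$ with $\varphi_{0}(t)/\varphi(t)\to 0$ as $t\to\infty$ that still satisfies \eqref{4.15}. The choice $\varphi_{0}(t):=\varphi(t)\,F(t)^{1/4}$ with $F(t):=\int_{t}^{\infty}ds/(s\varphi^{2}(s))$ works: the substitution $dF=-dt/(t\varphi^{2}(t))$ gives $\int_{1}^{\infty}dt/(t\varphi_{0}^{2}(t))=2\sqrt{F(1)}<\infty$, while slow variation of $\varphi$ together with slow variation of $F$ (standard from Karamata theory) places $\varphi_{0}$ in $\mathcal{M}$. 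The continuous embedding already proved applies to $\varphi_{0}$ to give $H^{q+n/2,\varphi_{0}}(\Gamma,V)\hookrightarrow C^{q}(\Gamma,V)$, so it suffices to show $H^{q+n/2,\varphi}(\Gamma,V)\hookrightarrow H^{q+n/2,\varphi_{0}}(\Gamma,V)$ is compact.

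The hard part is this last compactness, which Theorem~\ref{t8} does not cover since the principal smoothness index is unchanged. My plan is to derive it from the interpolation Theorem~\ref{t6}: represent both spaces as $[H^{q+n/2-\varepsilon}(\Gamma,V),H^{q+n/2+\delta}(\Gamma,V)]_{\psi}$ and $[\,\cdot\,]_{\psi_{0}}$ with the same admissible pair, for the interpolation parameters $\psi,\psi_{0}$ associated with $\varphi,\varphi_{0}$ by \eqref{3.30}. Since the Sobolev embedding $H^{q+n/2+\delta}(\Gamma,V)\hookrightarrow H^{q+n/2-\varepsilon}(\Gamma,V)$ is compact, the generating operator of the pair has purely discrete spectrum accumulating only at $\infty$; the embedding in question is then the abstract functional calculus operator $(\psi_{0}/\psi)(J)$, which is diagonalisable with eigenvalues tending to zero because $\psi_{0}/\psi\to 0$ at $\infty$ (as $\varphi_{0}/\varphi\to 0$). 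Thus the operator is compact, and composition with the continuous embedding established in the previous paragraph yields the compact embedding asserted by the theorem.
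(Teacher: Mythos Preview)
Your overall architecture matches the paper's: reduce to the scalar statement on $\mathbb{R}^{n}$ via the chart data, and for compactness manufacture an auxiliary $\varphi_{0}\in\mathcal{M}$ with $\varphi_{0}/\varphi\to 0$ that still satisfies \eqref{4.15}, then factor through $H^{q+n/2,\varphi_{0}}(\Gamma,V)$. Your spectral justification of the compact embedding $H^{q+n/2,\varphi}(\Gamma,V)\hookrightarrow H^{q+n/2,\varphi_{0}}(\Gamma,V)$ via the discrete spectrum of the generating operator $J$ is correct and in fact spells out what the paper merely cites; likewise your direct Fourier/Cauchy--Schwarz argument for sufficiency replaces the paper's appeal to H\"ormander's embedding theorem (Proposition~\ref{p3}).

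There is, however, a genuine gap in your necessity direction. In your first paragraph you assert that the bundle embedding is \emph{equivalent} to the scalar embedding ``for each component $u_{j,k}$''. One implication is immediate from the norm definitions \eqref{1.34} and \eqref{5.25}, but the converse is not: the functions $u_{j,k}$ arising from a section $u$ are constrained (supported in $\alpha_{j}^{-1}(\mathrm{supp}\,\chi_{j})$ and tied to one another through $u$), so knowing the bundle embedding does not automatically bound point evaluation on \emph{arbitrary} $w\in H^{q+n/2,\varphi}(\mathbb{R}^{n})$. The paper handles this by choosing an open $U\subset\Gamma_{1}$ on which $\chi_{1}\equiv 1$ and using the sewing operator $K$ from \eqref{3.7}--\eqref{3.7b} to lift any $w$ with $\mathrm{supp}\,w\subset\alpha_{1}^{-1}(U)$ to a section, thereby obtaining the local inclusion \eqref{6.1}; Proposition~\ref{p3} then supplies \eqref{4.15}. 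Your Riesz-representer computation is carried out on the full space $H^{q+n/2,\varphi}(\mathbb{R}^{n})$, but from the bundle hypothesis you only get boundedness of $w\mapsto D^{\alpha}w(x_{0})$ on the closed subspace of functions supported in a fixed open set $G$; to pass to the full space you still need an intermediate step (e.g.\ multiply by a cutoff equal to $1$ near $x_{0}$, using that multiplication by $C^{\infty}_{0}$ functions is bounded on $H^{s,\varphi}(\mathbb{R}^{n})$). Once you insert the sewing step and this localisation, your argument goes through; as written, the necessity half is incomplete. A smaller omission: you do not verify that the continuous extension of the identity is injective, which the paper checks explicitly via the flattening operator~$T$.
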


Here, of course,  $C^q(\Gamma, V)$ denotes the Banach space of all $q$ times continuously differentiable sections $u:\Gamma\rightarrow V$. The norm in this space is defined by the formula
\begin{equation}\label{5.25}
\|u\|_{(q);\Gamma,  V} := \sum_{j=1}^\varkappa\sum_{k=1}^p\|u_{j,k}\|_{(q);\mathbb{R}^n},
\end{equation}
where each $u_{j,k}\in C^q_\mathrm{b}(\mathbb{R}^n)$ is given by \eqref{1.134}. Here, $C^{q}_{\mathrm{b}}(\mathbb{R}^n)$ denotes the Banach space of all $q$ times continuously differentiable functions on $\mathbb{R}^n$ whose partial derivatives up to the $q$-th order are bounded on $\mathbb{R}^n$. This space is endowed with the norm
\begin{equation*}
\|w\|_{(q);\mathbb{R}^n} :=
\sum_{\mu_1 + \cdots + \mu_n\leq q} \sup_{t\in\mathbb{R}^n} \biggl|\frac{\partial^{\mu_1 + \ldots + \mu_n} w(t)}{\partial t_1^{\mu_1}, \ldots, \partial t_n^{\mu_n}}\biggr|
\end{equation*}
of a function $w$.

We will prove Theorems \ref{t6}--\ref{t10} in Section~\ref{sec7}. In the case where $\pi: V \rightarrow \Gamma$ is a trivial vector bundle of rank $p=1$, they are established by Mikhailets and Murach \cite{MikhailetsMurach08MFAT1} (see also their monograph \cite[Section~2.1.2]{MikhailetsMurach14}).

\section{Elliptic operators on the refined Sobolev scale on a vector bundle}

Consider elliptic PsDOs on a pair of vector bundles on $\Gamma$. Let $\pi_1: V_1 \rightarrow\Gamma$ and $\pi_2: V_2 \rightarrow\Gamma$ be two infinitely smooth complex vector bundles of the same rank $p\geq1$ on $\Gamma$.
We choose  the atlas $\{\alpha_j: \mathbb{R}^n \to \Gamma_j\}$ so that both the local trivializations
$\beta_{1,j}:\pi_1^{-1}(\Gamma_{j})\leftrightarrow\Gamma_{j}\times\mathbb{C}^{p}$ and $\beta_{2,j}:\pi_2^{-1}(\Gamma_{j})\leftrightarrow\Gamma_{j}\times\mathbb{C}^{p}$ are defined. We suppose that each vector bundle $\pi_k: V_k \rightarrow\Gamma$ with $k\in\{1,2\}$ is Hermitian. Let $\langle u,v\rangle_{\Gamma, V_k}$ denote the corresponding inner product of sections $u,v\in C^\infty(\Gamma, V_k)$ and its extension by continuity indicated in Theorem~\ref{t9}.

Given $m\in \mathbb{R}$, we let $\Psi^m_{\mathrm{ph}}(\Gamma; V_1, V_2)$ denote the class of all polyhomogeneous (classical) PsDOs $A: C^\infty(\Gamma, V_1) \rightarrow C^\infty(\Gamma, V_2)$ of order $m$ (see, e.g., \cite[Chapter~IV, Section~3]{Wells}). Recall that if $\Omega$ is an open nonempty subset of $\Gamma$ with $\overline{\Omega}\subset\Gamma_j$ for some $j\in \{1,\ldots, \varkappa\}$, then a PsDO $A\in \Psi^m_{\mathrm{ph}}(\Gamma; V_1, V_2)$ is represented locally in the form
\begin{equation}\label{4.26}
\Pi \bigl(\beta_{2,j} \circ(\varphi A(\psi u))\circ \alpha_j\bigr) =
(\varphi\circ\alpha_j)A_{\Omega} \Pi \bigl(\beta_{1,j} \circ(\psi u)\circ \alpha_j \bigr)
\end{equation}
for every section $u\in C^\infty(\Gamma, V_1)$  and  arbitrary scalar functions $\varphi, \psi\in C^\infty(\Gamma)$
with $\mathrm{supp}\,\varphi\subset\Omega$ and $\mathrm{supp}\,\psi\subset\Omega$. Here, $A_{\Omega}$ is a certain $p\times p$-matrix whose entries are polyhomogeneous  PsDOs on $\mathbb{R}^n$ of order $m$, and $\Pi$ is the projector defined by the formula $\Pi: (x,a)\mapsto a$ for arbitrary $x\in\Gamma$ and $a\in \mathbb{C}^p$.
For $A\in \Psi^m_{\mathrm{ph}}(\Gamma; V_1, V_2)$ there is a unique formally adjoint PsDO $A^+\in \Psi^m_{\mathrm{ph}}(\Gamma; V_2, V_1)$ defined by the formula $\langle Au, w\rangle_{\Gamma, V_2} = \langle u, A^+w\rangle_{\Gamma, V_1}$ for all $u\in C^\infty(\Gamma, V_1)$ and $w\in C^\infty(\Gamma, V_2)$.

Hereafter we let $m\in \mathbb{R}$ and suppose that $A$ is an arbitrary elliptic PsDO from the class $\Psi^m_{\mathrm{ph}}(\Gamma; V_1, V_2)$. The ellipticity of $A$ is equivalent to that each operator $A_{\Omega} = (A^{l,r}_{\Omega})^p_{l,r=1}$ from \eqref{4.26} is elliptic on the set $\alpha^{-1}_j(\Omega)$, i.e.
$\det(a^{l,r}_{\Omega,0}(x, \xi))^p_{l,r=1}\neq0$ for all $x\in\alpha^{-1}_j(\Omega)$ and $\xi\in\mathbb{R}^n\setminus\{0\}$, with $a^{l,r}_{\Omega,0}(x, \xi)$ being the principal symbol of the scalar PsDO $A^{l,r}_{\Omega}$ on $\mathbb{R}^n$. Put
\begin{gather}
\mathfrak{N}:=\{ u\in C^\infty(\Gamma, V_1): Au=0 \,\,\mbox{on}\,\, \Gamma\},
\\
\mathfrak{N^+}:=\{ w\in C^\infty(\Gamma, V_2): A^+w=0 \,\,\mbox{on}\,\, \Gamma\}.
\end{gather}
Since the PsDOs $A$ and $A^+$ are elliptic, then the spaces $\mathfrak{N}$ and $\mathfrak{N}^+$ are finite-dimensional (see, e.g., \cite[Theorem~4.8]{Wells}).

\begin{theorem}\label{t2} 
Let $s\in\mathbb{R}$ and $\varphi\in\mathcal{M}$. The mapping $u \mapsto Au$, $u\in C^{\infty}(\Gamma, V_1)$, extends uniquely (by continuity) to a bounded linear operator
\begin{equation}\label{4.222}
A: H^{s+m,\varphi}(\Gamma, V_1) \rightarrow H^{s,\varphi}(\Gamma, V_2).
\end{equation}
This operator is Fredholm. Its kernel is
$\mathfrak{N}$, and its domain
\begin{equation}
A(H^{s+m,\varphi}(\Gamma, V_1)) = \{f\in H^{s,\varphi}(\Gamma, V_2): \langle f,w\rangle_{\Gamma, V_2}=0 \;\mbox{for all}\;  w\in \mathfrak{N^+}\}.
\end{equation}
The index of operator \eqref{4.222} is equal to $\dim\mathfrak{N}-\dim\mathfrak{N}^+$ and does not dependent on $s$ and~$\varphi$.
\end{theorem}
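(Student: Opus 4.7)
The plan is to bootstrap from the classical Sobolev-scale version of the theorem (standard in the cited sources \cite{Hermander87, Wells}) by means of the interpolation identity in Theorem~\ref{t6}, together with a parametrix argument that transfers the Fredholm property and the description of the kernel from the smooth setting.

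For the bounded extension, fix any $\varepsilon,\delta>0$. By the classical theory $A$ induces bounded operators $A: H^{s+m-\varepsilon}(\Gamma, V_1)\to H^{s-\varepsilon}(\Gamma, V_2)$ and $A: H^{s+m+\delta}(\Gamma, V_1)\to H^{s+\delta}(\Gamma, V_2)$. Since the function $\psi$ from \eqref{3.30} is an interpolation parameter, applying interpolation with $\psi$ to these two pairs and using Theorem~\ref{t6} on both source and target yields the bounded operator~\eqref{4.222}. Uniqueness of the continuous extension is immediate from the density of $C^\infty(\Gamma, V_1)$ in $H^{s+m,\varphi}(\Gamma, V_1)$, which is built into the definition of the latter as a completion.

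For the Fredholm property and the identification of the kernel, I would use a parametrix. Elliptic theory supplies a PsDO $B\in\Psi^{-m}_{\mathrm{ph}}(\Gamma;V_2,V_1)$ with $BA=I-T_1$ on $C^\infty(\Gamma, V_1)$ and $AB=I-T_2$ on $C^\infty(\Gamma, V_2)$, where $T_1, T_2$ are smoothing. The interpolation argument above extends $B$ to a bounded operator $B: H^{s,\varphi}(\Gamma, V_2)\to H^{s+m,\varphi}(\Gamma, V_1)$, while $T_1$ and $T_2$ factor through $H^{s+m+1,\varphi}$ and $H^{s+1,\varphi}$ respectively and are thus compact by the compact embedding of Theorem~\ref{t8}. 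Hence~\eqref{4.222} is Fredholm. If $Au=0$ for some $u\in H^{s+m,\varphi}(\Gamma, V_1)$, then $u=T_1u\in C^\infty(\Gamma, V_1)$, so $\ker A\subset\mathfrak{N}$; the reverse inclusion is immediate since smooth sections belong to every refined Sobolev space.

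For the range and the index, I would apply the same boundedness and Fredholm results to the formally adjoint PsDO $A^+\in\Psi^m_{\mathrm{ph}}(\Gamma;V_2,V_1)$ (also elliptic of order $m$), producing a bounded Fredholm operator $A^+: H^{-s,1/\varphi}(\Gamma, V_2)\to H^{-s-m,1/\varphi}(\Gamma, V_1)$ with kernel $\mathfrak{N}^+$. Theorem~\ref{t9} identifies $H^{-s,1/\varphi}(\Gamma, V_k)$ with the antidual of $H^{s,\varphi}(\Gamma, V_k)$ via $\langle\cdot,\cdot\rangle_{\Gamma, V_k}$ for $k=1,2$. Propagating the identity $\langle Au,w\rangle_{\Gamma, V_2}=\langle u, A^+w\rangle_{\Gamma, V_1}$ from smooth sections by continuity shows that the bounded extension of $A^+$ is the Banach-space adjoint of~\eqref{4.222} with respect to these pairings. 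The closed-range theorem for Fredholm operators then gives $A(H^{s+m,\varphi}(\Gamma, V_1))=\{f\in H^{s,\varphi}(\Gamma, V_2):\langle f,w\rangle_{\Gamma, V_2}=0\;\text{for all}\; w\in\mathfrak{N}^+\}$, and the index equals $\dim\mathfrak{N}-\dim\mathfrak{N}^+$, manifestly independent of $s$ and~$\varphi$. The main obstacle is verifying that the pairing of Theorem~\ref{t9} genuinely realizes the Banach-space duality between the extended operator~\eqref{4.222} and the extension of $A^+$, so that the closed-range theorem is applicable; this requires carefully propagating the adjoint identity through two distinct refined Sobolev completions and checking compatibility with the duality pairing. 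Everything else then reduces to the interpolation Theorem~\ref{t6}, the compactness in Theorem~\ref{t8}, and standard Fredholm theory.
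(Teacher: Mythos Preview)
Your argument is correct, but it takes a different route from the paper's. The paper does not use a parametrix or the duality of Theorem~\ref{t9} at all in this proof. Instead, after obtaining the bounded extension exactly as you do (this is isolated as Lemma~\ref{l1}), it simply invokes Proposition~\ref{p4}: since the classical Sobolev operators $A:H^{s\mp1+m}(\Gamma,V_1)\to H^{s\mp1}(\Gamma,V_2)$ are Fredholm with common kernel $\mathfrak{N}$ and common index $\dim\mathfrak{N}-\dim\mathfrak{N}^+$, Proposition~\ref{p4} says the interpolated operator is Fredholm with the same kernel and index, and moreover its range is $H^{s,\varphi}(\Gamma,V_2)\cap A(H^{s-1+m}(\Gamma,V_1))$, which by the Sobolev-scale range description~\eqref{4.223} is exactly the claimed orthogonal complement of $\mathfrak{N}^+$.

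The paper's approach is shorter and stays entirely inside the interpolation framework: one abstract lemma (Proposition~\ref{p4}) does all the work, and neither the parametrix nor Theorem~\ref{t9} nor the closed-range theorem is needed. Your approach, by contrast, reproves the Fredholm property from scratch via the parametrix and Theorem~\ref{t8}, and then recovers the range by duality; this is more hands-on and makes the elliptic mechanism visible, but it requires you to carry through the adjoint identity under two different completions, which---as you rightly flag---is the one place where care is needed. Both arguments are sound; the paper's is just more economical given that Proposition~\ref{p4} is already available.
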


Recall that a bounded linear operator $T: E_1 \rightarrow E_2$ between Banach spaces $E_1$ and $E_2$ is called Fredholm if its kernel $\mathrm{ker}\,T$ and co-kernel $\mathrm{coker}\,T := E_2/T(X)$ are finite-dimensional. If the operator $T$ is Fredholm, then its domain $T(X)$ is closed in $E_2$ and its index $\mathrm{ind}\;T := \dim \ker T - \dim \mathrm{coker}\,T$ is finite (see, e.g. \cite[Lemma~19.1.1]{Hermander87}).

If $\mathfrak{N}=\{0\}$ and $\mathfrak{N^+}=\{0\}$, then operator~\eqref{4.222} is an isomorphism between the spaces $H^{s+m,\varphi}(\Gamma, V_1)$ and $H^{s,\varphi}(\Gamma, V_2)$ by virtue of the Banach theorem of inverse operator. In the general situation, this operator induces an isomorphism between their certain subspaces of finite codimension. In this connection  consider the following decompositions of these spaces into direct sums of their subspaces:
\begin{gather}\label{4.220}
H^{s+m,\varphi}(\Gamma, V_1)=\mathfrak{N}\dotplus\{u\in H^{s+m,\varphi}(\Gamma, V_1):\langle u,v\rangle_{\Gamma, V_1}=0\;\,\mbox{for all}\;v\in \mathfrak{N}\},\\
H^{s,\varphi}(\Gamma, V_2)=\mathfrak{N}^{+}\dotplus\{f\in H^{s,\varphi}(\Gamma, V_2):\langle f,w\rangle_{\Gamma, V_2}=0\;\,\mbox{for all}\;w\in \mathfrak{N}^{+}\}.\label{4.221}
\end{gather}
These decompositions are well defined because the summands in them has the trivial intersection and the finite dimension of the first summand is equal to the codimension of the second one. This equality is due to the following fact: if we consider $\mathfrak{N}$ as a subspace of $H^{-s-m,1/\varphi}(\Gamma, V_1)$, then the dual of $\mathfrak{N}$ with respect to the form $\langle \cdot, \cdot\rangle_{\Gamma, V_1}$  coincides with the second summand in \eqref{4.220} according to Theorem~\ref{t9},  analogous reasoning being valid for \eqref{4.221}.

Let $\textit{P}$ and $\textit{P}^+$ denote the oblique projectors of the spaces $H^{s+m,\varphi}(\Gamma, V_1)$ and $H^{s,\varphi}(\Gamma, V_2)$ onto the second summands parallel to  the first summands in \eqref{4.220} and \eqref{4.221} respectively. These projectors are independent of $s$ and $\varphi$.

\begin{theorem}\label{t7}
Let $s\in\mathbb{R}$ and $\varphi\in\mathcal{M}$. The restriction of operator \eqref{4.222} to the subspace $P(H^{s+m,\varphi}(\Gamma, V_1))$ is an isomorphism
\begin{equation}\label{4.235}
A: P(H^{s+m,\varphi}(\Gamma, V_1)) \leftrightarrow P^+(H^{s,\varphi}(\Gamma, V_2)).
\end{equation}
\end{theorem}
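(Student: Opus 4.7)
The plan is to derive Theorem~\ref{t7} as an essentially formal consequence of Theorem~\ref{t2} combined with the topological direct-sum decompositions \eqref{4.220} and \eqref{4.221}. First I would check that the two subspaces $P(H^{s+m,\varphi}(\Gamma, V_1))$ and $P^{+}(H^{s,\varphi}(\Gamma, V_2))$ are closed in the ambient H\"ormander spaces (hence themselves Hilbert with the induced inner products). This is where Theorem~\ref{t9} enters: the sesquilinear form $\langle\cdot,\cdot\rangle_{\Gamma, V_{1}}$ extends to a duality between $H^{s+m,\varphi}(\Gamma, V_{1})$ and $H^{-s-m,1/\varphi}(\Gamma, V_{1})$, so if $v_{1},\dots,v_{d}$ is a basis of the finite-dimensional space $\mathfrak{N}\subset C^{\infty}(\Gamma,V_{1})\subset H^{-s-m,1/\varphi}(\Gamma, V_{1})$, the second summand in \eqref{4.220} is exactly the kernel of the bounded linear map $u\mapsto(\langle u,v_{i}\rangle_{\Gamma, V_{1}})_{i=1}^{d}$, hence closed. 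The analogous argument with $A^{+}$ and $\mathfrak{N}^{+}$ handles \eqref{4.221}. As a by-product, the oblique projectors $P$ and $P^{+}$ are bounded.

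Next I would verify the three properties needed for \eqref{4.235} to be an isomorphism. \emph{Well-definedness}: by Theorem~\ref{t2} the range of the full operator \eqref{4.222} equals $P^{+}(H^{s,\varphi}(\Gamma, V_{2}))$, so the restriction automatically takes values in the claimed target. \emph{Injectivity}: if $u\in P(H^{s+m,\varphi}(\Gamma, V_{1}))$ satisfies $Au=0$, then $u\in\ker A=\mathfrak{N}$ by Theorem~\ref{t2}, but $\mathfrak{N}\cap P(H^{s+m,\varphi}(\Gamma, V_{1}))=\{0\}$ in view of \eqref{4.220}, so $u=0$. \emph{Surjectivity}: given $f\in P^{+}(H^{s,\varphi}(\Gamma, V_{2}))$, Theorem~\ref{t2} supplies some $u\in H^{s+m,\varphi}(\Gamma, V_{1})$ with $Au=f$; decomposing $u=u_{0}+u_{1}$ according to \eqref{4.220} with $u_{0}\in\mathfrak{N}$ and $u_{1}\in P(H^{s+m,\varphi}(\Gamma, V_{1}))$, we have $Au_{0}=0$ and therefore $Au_{1}=f$.

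Finally, \eqref{4.235} is a continuous linear bijection between Hilbert spaces, so the Banach open mapping theorem (or equivalently the inverse operator theorem) yields the boundedness of its inverse and hence the asserted isomorphism. There is no substantial obstacle: the only slightly delicate point is the closedness of the complementary subspaces, which hinges on the duality statement of Theorem~\ref{t9}; everything else is a two-line diagram chase in the decompositions \eqref{4.220}--\eqref{4.221} once the Fredholm description from Theorem~\ref{t2} is in hand.
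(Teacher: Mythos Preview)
Your proposal is correct and follows essentially the same route as the paper: identify the kernel and range of \eqref{4.222} via Theorem~\ref{t2}, observe that the restriction to $P(H^{s+m,\varphi}(\Gamma,V_1))$ is therefore a bounded bijection onto $P^{+}(H^{s,\varphi}(\Gamma,V_2))$, and invoke the Banach inverse operator theorem. The paper's own proof is a terse three-line version of exactly this; your added verification that the complementary subspaces are closed (via Theorem~\ref{t9}) is something the paper handles in the discussion preceding the statement rather than in the proof itself.
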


The solutions $u\in H^{s+m,\varphi}(\Gamma, V_1)$ to the elliptic equation $Au=f$ satisfy the following a priori estimate.

\begin{theorem}\label{t3}
Let $s\in\mathbb{R}$, $\varphi\in\mathcal{M}$, and $\sigma<s+m$. Besides, let functions $\chi,\eta\in C^\infty(\Gamma)$ be chosen so that $\eta=1$ in a neighbourhood of $\mathrm{supp}\chi$. Then there exists a number $c>0$ that, for any sections $u\in H^{s+m,\varphi}(\Gamma, V_1)$ and $f\in H^{s,\varphi}(\Gamma, V_2)$ satisfying the equation $Au=f$ on~$\Gamma$, we have the estimate
\begin{equation}\label{f4}
\|\chi u\|_{s+m,\varphi;\Gamma, V_1}\leq c\,\bigl(\|\eta f\|_{s,\varphi;\Gamma, V_2}+\|u\|_{\sigma;\Gamma, V_1}\bigr).
\end{equation}
\end{theorem}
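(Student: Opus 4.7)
The plan is to reduce \eqref{f4} to boundedness and localization properties of pseudodifferential operators on the refined Sobolev scale by means of a parametrix argument. Since $A$ is elliptic of order $m$, a standard construction produces a parametrix $B\in\Psi^{-m}_{\mathrm{ph}}(\Gamma;V_{2},V_{1})$ satisfying $BA=I+T$ on $C^{\infty}(\Gamma,V_{1})$ with $T\in\Psi^{-\infty}$ smoothing. Before exploiting this identity I would record the following preliminary boundedness fact: every $A_{0}\in\Psi^{\mu}_{\mathrm{ph}}(\Gamma;V,V')$ extends by continuity to a bounded operator $H^{s+\mu,\varphi}(\Gamma,V)\to H^{s,\varphi}(\Gamma,V')$ for all $s\in\mathbb{R}$ and $\varphi\in\mathcal{M}$. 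This is classical for $\varphi\equiv1$; the interpolation formula of Theorem~\ref{t6}, combined with the definition of interpolation parameter, promotes it to the whole refined scale. In particular, the identity $u=Bf-Tu$ makes sense in $H^{s+m,\varphi}(\Gamma,V_{1})$ whenever $u\in H^{s+m,\varphi}(\Gamma,V_{1})$ and $f:=Au\in H^{s,\varphi}(\Gamma,V_{2})$.

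Multiplying by $\chi$ and splitting $f=\eta f+(1-\eta)f$ I obtain
\[
\chi u=\chi B(\eta f)+\chi B\bigl((1-\eta)f\bigr)-\chi T u.
\]
The mapping $v\mapsto\chi B(\eta v)$ lies in $\Psi^{-m}_{\mathrm{ph}}(\Gamma;V_{2},V_{1})$, whence the preliminary boundedness fact gives $\|\chi B(\eta f)\|_{s+m,\varphi;\Gamma,V_{1}}\leq C_{1}\|\eta f\|_{s,\varphi;\Gamma,V_{2}}$. The composition $\chi T$ is smoothing, hence bounded between any pair of spaces on the refined scale (again by interpolation from the Sobolev case, or directly from the fact that its Schwartz kernel is $C^{\infty}$), which yields $\|\chi T u\|_{s+m,\varphi;\Gamma,V_{1}}\leq C_{2}\|u\|_{\sigma;\Gamma,V_{1}}$.

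The decisive step is the estimate of the remaining term $\chi B\bigl((1-\eta)f\bigr)$. Since $\eta\equiv1$ on a neighbourhood of $\mathrm{supp}\,\chi$, the compact sets $\mathrm{supp}\,\chi$ and $\mathrm{supp}(1-\eta)$ are disjoint, so the Schwartz kernel of $\chi B(1-\eta)$ is supported in $\mathrm{supp}\,\chi\times\mathrm{supp}(1-\eta)$, entirely off the diagonal. As a polyhomogeneous PsDO has a $C^{\infty}$ kernel away from the diagonal, $\chi B(1-\eta)$ is smoothing, and consequently so is $\chi B(1-\eta)A$. Substituting $f=Au$ I get
\[
\bigl\|\chi B((1-\eta)f)\bigr\|_{s+m,\varphi;\Gamma,V_{1}}=\bigl\|\chi B(1-\eta)A\,u\bigr\|_{s+m,\varphi;\Gamma,V_{1}}\leq C_{3}\|u\|_{\sigma;\Gamma,V_{1}}.
\]
Summing the three contributions yields \eqref{f4} with $c=C_{1}+C_{2}+C_{3}$.

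I expect the main obstacle to be the preliminary boundedness statement for arbitrary polyhomogeneous PsDOs on the refined scale, together with its specialization to smoothing operators. The parametrix and localization mechanics are essentially the classical ones; what is specific to the refined setting is that all boundedness bounds must hold uniformly with respect to the function parameter $\varphi\in\mathcal{M}$, and this uniformity is ultimately supplied by Theorem~\ref{t6} and the interpolation framework recalled in Section~3.
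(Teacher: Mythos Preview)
Your argument is correct, but it proceeds along a genuinely different route from the paper's proof.

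The paper does not invoke a parametrix explicitly in this proof. Instead, it first establishes the global estimate \eqref{5.1} directly from the Fredholm property of \eqref{4.222} (Theorem~\ref{t2}) via Peetre's lemma, and then obtains the localized estimate \eqref{f4} through an inductive commutator argument: writing $A(\chi u)=\chi Au+\chi A((\eta-1)u)+A'(\eta u)$ with $A'\in\Psi^{m-1}_{\mathrm{ph}}$, one gains one order per step and iterates until the residual term $\|\eta u\|_{s+m-r,\varphi}$ can be absorbed into $\|u\|_{\sigma}$. Your route is the standard microlocal one: use $BA=I+T$, split with the cutoffs, and exploit that $\chi B(1-\eta)$ has Schwartz kernel supported off the diagonal, hence is smoothing. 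Both proofs rest on the same ``preliminary boundedness fact'' (which is exactly the paper's Lemma~\ref{l1}) and on Theorem~\ref{t6} to pass from the Sobolev case to the full refined scale.

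What each approach buys: your parametrix argument is shorter and avoids the induction on~$r$; the paper's commutator iteration, on the other hand, yields along the way the intermediate inequality \eqref{5.6} and hence the variant \eqref{5.2} of Remark~\ref{r1} (with $\eta=\chi$ when $s+m-1<\sigma<s+m$), which your decomposition does not produce without an extra step. A small stylistic point: the map $v\mapsto\chi B(\eta v)$ is not literally a single element of $\Psi^{-m}_{\mathrm{ph}}$ but a composition of order-zero multiplications with $B$; its boundedness follows from Lemma~\ref{l1} applied to each factor, which is what you are really using.
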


\begin{remark}\label{r1}
If $\chi=1$ on $\Gamma$, then \eqref{f4} becomes  the global estimate
\begin{equation}\label{5.1}
\| u\|_{s+m,\varphi;\Gamma, V_1}\leq c\,\bigl(\| f\|_{s,\varphi;\Gamma, V_2}+\|u\|_{\sigma;\Gamma, V_1}\bigr).
\end{equation}
If $s+m-1<\sigma<s+m$, then we can take $\eta:=\chi$ in  \eqref{f4}; this follows in view of Theorem~\ref{t8} from the estimate
\begin{equation}\label{5.2}
\|\chi u\|_{s+m,\varphi;\Gamma, V_1}\leq c\,\bigl(\|\chi f\|_{s,\varphi;\Gamma, V_2}+\|u\|_{s+m-1,\varphi;\Gamma, V_1}\bigr).
\end{equation}
\end{remark}

Consider the local regularity of the solutions to the elliptic equation $Au=f$.
Given $j\in\{1,2\}$, we put
$$
H^{-\infty}(\Gamma, V_j):= \bigcup_{\sigma\in\mathbb{R}} H^{\sigma}(\Gamma, V_j)=\bigcup_{\sigma\in\mathbb{R}, \eta\in\mathcal{M}} H^{\sigma,\eta}(\Gamma, V_j)
$$
(the latter equality  is due to Theorem~\ref{t8}). Assume that $\Gamma_0$ is an arbitrary open nonempty subset of $\Gamma$. We let  $H_{\mathrm{loc}}^{s,\varphi}(\Gamma_0,V_j)$, with  $s\in\mathbb{R}$ and $\varphi\in\mathcal{M}$, denote the linear space of all sections $v\in H^{-\infty}(\Gamma, V_j)$ such that $\chi v\in H^{s,\varphi}(\Gamma,V_j)$ for every function $\chi \in C^\infty(\Gamma)$ with $\mathrm{supp}\,\chi\subset \Gamma_0$. Here, the product $\chi v\in H^{-\infty}(\Gamma, V_j)$  is well defined by closer.

\begin{theorem}\label{t5}
Let $u\in H^{-\infty}(\Gamma, V_1)$ be a solution to the elliptic equation  $Au=f$ on $\Gamma$ for a certain section
 $f\in H_{\mathrm{loc}}^{s,\varphi}(\Gamma_0, V_2)$. Then $u\in H_{\mathrm{loc}}^{s+m,\varphi}(\Gamma_0, V_1)$.
\end{theorem}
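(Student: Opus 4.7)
The plan is to deduce Theorem~\ref{t5} from a parametrix construction combined with the pseudolocal property of classical PsDOs. The claim being local, it suffices to fix an arbitrary $\chi\in C^\infty(\Gamma)$ with $\mathrm{supp}\,\chi\subset\Gamma_0$ and establish $\chi u\in H^{s+m,\varphi}(\Gamma, V_1)$. I would pick $\eta\in C^\infty(\Gamma)$ with $\mathrm{supp}\,\eta\subset\Gamma_0$ and $\eta\equiv 1$ on a neighbourhood of $\mathrm{supp}\,\chi$; the hypothesis then guarantees $\eta f\in H^{s,\varphi}(\Gamma, V_2)$.

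Next I would invoke the classical existence of a parametrix $B\in\Psi^{-m}_{\mathrm{ph}}(\Gamma; V_2, V_1)$ of $A$ satisfying $BA=I+T$ with $T$ smoothing (of order $-\infty$). Since $u\in H^{-\infty}(\Gamma, V_1)$, the section $Tu$ lies in $C^\infty(\Gamma, V_1)$, so that
\[
\chi u \;=\; \chi Bf - \chi Tu \;=\; \chi B(\eta f) + \chi B((1-\eta)f) - \chi Tu.
\]
The summand $\chi Tu$ is smooth. For the summand $\chi B(\eta f)$, I would use the fact that the boundedness half of Theorem~\ref{t2} holds for every classical PsDO on the refined Sobolev scale (its proof through the Sobolev case and the interpolation Theorem~\ref{t6} uses no ellipticity), so $B:H^{s,\varphi}(\Gamma, V_2)\to H^{s+m,\varphi}(\Gamma, V_1)$ is bounded and $B(\eta f)\in H^{s+m,\varphi}(\Gamma, V_1)$; multiplication by $\chi\in C^\infty(\Gamma)$ preserves this space, being realized by an order-zero classical PsDO on $V_1$. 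For the middle summand, $\mathrm{supp}\,\chi$ and $\mathrm{supp}(1-\eta)$ are disjoint, hence by the pseudolocal property for classical PsDOs between sections of vector bundles the mapping $g\mapsto\chi B((1-\eta)g)$ is smoothing; applied to $f\in H^{-\infty}(\Gamma, V_2)$ it yields $\chi B((1-\eta)f)\in C^\infty(\Gamma, V_1)$. Adding the three terms shows $\chi u\in H^{s+m,\varphi}(\Gamma, V_1)$, as required.

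The principal obstacle is not the scheme itself but verifying its ingredients in the vector bundle setting: the existence of a left parametrix $B$ for $A$, its pseudolocality (that $\chi B(1-\eta)$ is smoothing whenever $\mathrm{supp}\,\chi$ and $\mathrm{supp}(1-\eta)$ are disjoint), and the boundedness of an arbitrary classical PsDO on the refined scale $\{H^{s,\varphi}(\Gamma, V_j)\}$. These are routine generalizations of the scalar manifold theory and are presumably collected among the auxiliary facts of Section~6; granted them, the argument above is essentially a half-page check, and no bootstrap in the regularity index $s$ is needed.
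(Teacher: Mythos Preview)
Your argument is correct, but it differs from the paper's proof in a substantive way.

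The paper does not invoke a parametrix directly. It first treats the global case $\Gamma_0=\Gamma$ purely through the Fredholm description of the range in Theorem~\ref{t2}: since $f\in H^{s,\varphi}(\Gamma,V_2)\cap A\bigl(H^{s+m-r,\varphi}(\Gamma,V_1)\bigr)=A\bigl(H^{s+m,\varphi}(\Gamma,V_1)\bigr)$, one picks $v\in H^{s+m,\varphi}(\Gamma,V_1)$ with $Av=f$ and observes that $u-v\in\mathfrak{N}\subset C^\infty(\Gamma,V_1)$. For the local case the paper then runs a bootstrap: writing $A(\chi u)=\chi f+\chi A((\eta-1)u)+A'(\eta u)$ with $A'\in\Psi^{m-1}_{\mathrm{ph}}$, it gains one order of local regularity per step and iterates from $H^{s+m-r,\varphi}_{\mathrm{loc}}$ up to $H^{s+m,\varphi}_{\mathrm{loc}}$.

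Your parametrix route replaces both steps by a single application of $B$ together with pseudolocality, which is cleaner and avoids the induction entirely. The trade-off is that you must import the parametrix construction and the smoothing property of $\chi B(1-\eta)$ from the general PsDO calculus on vector bundles; these are standard (and the paper itself uses the disjoint-support smoothing fact in the proof of Theorem~\ref{t3}), but they are not among the auxiliary results recorded in Section~6. The boundedness statement you need for $B$ and for multiplication by $\chi$ is exactly Lemma~\ref{l1}. The paper's approach, by contrast, stays entirely within the results already established (Theorem~\ref{t2} and Lemma~\ref{l1}) and so is more self-contained relative to the rest of the article, at the cost of the extra bootstrap machinery.
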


As we can see, the supplementary regularity $\varphi$ is inherited by the solution. If $\Gamma_0=\Gamma$, then the local spaces  $H_{\mathrm{loc}}^{s+m,\varphi}(\Gamma_0, V_1)$ and $H_{\mathrm{loc}}^{s,\varphi}(\Gamma_0, V_2)$ becomes $H^{s+m,\varphi}(\Gamma, V_1)$ and $H^{s,\varphi}(\Gamma, V_2)$ respectively and then Theorem~\ref{t5} says about the global regularity on $\Gamma$.

As an application of the refine Sobolev scale, we give the following result:

\begin{theorem}\label{t4}
Let $0\leq q\in \mathbb{Z}$. Suppose that a section $u\in H^{-\infty}(\Gamma, V_1)$ is a solution to the elliptic equation $Au=f$ on $\Gamma$ where $f\in H_{\mathrm{loc}}^{q-m+n/2,\varphi}(\Gamma_0, V_2)$  for a certain function $\varphi\in\mathcal{M}$ subject to  condition~\eqref{4.15}. Then $u\in C_{\mathrm{loc}}^q(\Gamma_0, V_1)$.
\end{theorem}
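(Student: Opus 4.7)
The plan is to combine Theorem~\ref{t5} with the Sobolev-type embedding supplied by Theorem~\ref{t10} via a standard cut-off argument. The point is that Theorem~\ref{t5} delivers the optimal H\"ormander regularity $H_{\mathrm{loc}}^{q+n/2,\varphi}$ of the solution on $\Gamma_0$, and the logarithmic integrability assumption \eqref{4.15} is precisely what is needed to upgrade this regularity to $C^q$.

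First I would set $s:=q-m+n/2$ and invoke Theorem~\ref{t5} applied to the elliptic PsDO $A\in\Psi^m_{\mathrm{ph}}(\Gamma;V_1,V_2)$. Since $f\in H_{\mathrm{loc}}^{s,\varphi}(\Gamma_0, V_2)$ by hypothesis, that theorem immediately yields
\[
u\in H_{\mathrm{loc}}^{s+m,\varphi}(\Gamma_0, V_1)=H_{\mathrm{loc}}^{q+n/2,\varphi}(\Gamma_0, V_1).
\]
Next, fix an arbitrary point $x_0\in\Gamma_0$ and choose a cut-off $\chi\in C^\infty(\Gamma)$ with $\mathrm{supp}\,\chi\subset\Gamma_0$ and $\chi\equiv1$ in some open neighbourhood $U\subset\Gamma_0$ of $x_0$. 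By the definition of $H_{\mathrm{loc}}^{q+n/2,\varphi}(\Gamma_0,V_1)$, the section $\chi u$ lies in the global space $H^{q+n/2,\varphi}(\Gamma, V_1)$.

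Now I would apply Theorem~\ref{t10}, whose hypothesis is exactly condition \eqref{4.15}: it gives the continuous embedding $H^{q+n/2,\varphi}(\Gamma, V_1)\hookrightarrow C^q(\Gamma, V_1)$. Therefore $\chi u\in C^q(\Gamma, V_1)$, and since $\chi\equiv1$ on $U$, the section $u$ is $q$ times continuously differentiable on $U$. As $x_0\in\Gamma_0$ was arbitrary, this means $u\in C_{\mathrm{loc}}^q(\Gamma_0, V_1)$, as required.

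The argument is essentially a two-line composition of previously established results, so the only potential obstacle is the transition from local Sobolev regularity (stated for products $\chi u$) to pointwise $C^q$ regularity on $\Gamma_0$. This step is harmless because the embedding of Theorem~\ref{t10} is global and multiplication by $\chi\in C^\infty(\Gamma)$ acts boundedly on each $H^{s,\varphi}(\Gamma,V_1)$; the latter fact is implicit in the well-definedness of $H_{\mathrm{loc}}^{s,\varphi}$ already used in Section~5. Thus the real content of the theorem is concentrated in Theorems~\ref{t5} and~\ref{t10}, and condition \eqref{4.15} plays the expected role of the sharp sufficient condition for passing from the refined Sobolev scale to classical $C^q$-smoothness.
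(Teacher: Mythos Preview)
Your proof is correct and follows essentially the same route as the paper: apply Theorem~\ref{t5} with $s=q-m+n/2$ to obtain $u\in H_{\mathrm{loc}}^{q+n/2,\varphi}(\Gamma_0,V_1)$, then use the embedding of Theorem~\ref{t10} under condition~\eqref{4.15} on each $\chi u$. The only cosmetic difference is that the paper takes $\chi$ arbitrary with $\mathrm{supp}\,\chi\subset\Gamma_0$ and concludes directly from the definition of $C^q_{\mathrm{loc}}(\Gamma_0,V_1)$, whereas you localize around a point $x_0$; both yield the same conclusion.
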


Here, $C_{\mathrm{loc}}^q(\Gamma_0, V_1)$ denotes the linear space of all section $u\in H^{-\infty}(\Gamma, V_1)$ such that $\chi u\in C^q(\Gamma, V_1)$ for arbitrary $\chi\in C^\infty(\Gamma)$ with $\mathrm{supp}\, \chi\subset\Gamma_0$.

\begin{remark}\label{r2}
Let $ \varphi\in \mathcal{M}$. Condition~\eqref{4.15} is sharp in Theorem~\ref{t4}. Namely, this condition is equivalent to the implication
\begin{equation}\label{4.16}
\bigl(u\in H^{-\infty}(\Gamma, V_1), \;Au\in H_{\mathrm{loc}}^{q-m+n/2,\varphi}(\Gamma_0, V_2)\bigr) \Rightarrow u\in C_{\mathrm{loc}}^q(\Gamma_0, V_1).
\end{equation}
\end{remark}

We will prove Theorems \ref{t2}--\ref{t4}, formula~\eqref{5.2} in Remark~\ref{r1}, and Remark~\ref{r2} in Section~\ref{sec8}.  In the case where both $\pi_1: V_1 \rightarrow \Gamma$ and $\pi_2: V_2 \rightarrow \Gamma$ are trivial vector bundles of rank $p=1$, these theorems are proved by Mikhailets and Murach \cite{MikhailetsMurach08MFAT1} (see also their monograph \cite[Sections 2.2.2 and 2.2.3]{MikhailetsMurach14}).

\section{Auxiliary results}

We will use three properties of the interpolation with a function parameter.  The first of them reduces the interpolation between orthogonal sums of Hilbert spaces to the interpolation between the summands (see, e.g., \cite[Theorem~1.5]{MikhailetsMurach14}).

\begin{proposition}\label{p2}
Let $\bigl[X_{0}^{(j)},X_{1}^{(j)}\bigr]$, with $j=1,\ldots,r$, be a finite
collection of admissible couples of Hilbert spaces. Then for every function
$\psi\in\mathcal{B}$ we have
$$
\biggl[\,\bigoplus_{j=1}^{r}X_{0}^{(j)},\,\bigoplus_{j=1}^{r}X_{1}^{(j)}\biggr]_{\psi}=\,
\bigoplus_{j=1}^{r}\bigl[X_{0}^{(j)},\,X_{1}^{(j)}\bigr]_{\psi}
$$
with equality of norms.
\end{proposition}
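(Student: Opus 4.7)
The plan is to reduce the proposition to a direct computation with the generating operators of the admissible couples and the Borel functional calculus, exploiting the fact that the whole construction of $X_\psi$ goes through the Spectral Theorem.

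First, let $J_j: X_1^{(j)}\leftrightarrow X_0^{(j)}$ denote the generating operator of the pair $[X_0^{(j)},X_1^{(j)}]$; by definition, $J_j$ is self-adjoint and positive-definite in $X_0^{(j)}$ with domain $X_1^{(j)}$, and acts as an isometric isomorphism onto $X_0^{(j)}$. I would form the orthogonal sum $X_\iota:=\bigoplus_{j=1}^{r}X_\iota^{(j)}$ for $\iota=0,1$, and set $J:=\bigoplus_{j=1}^{r}J_j$, i.e. the diagonal operator $J(u_1,\ldots,u_r):=(J_1u_1,\ldots,J_ru_r)$ with natural domain $X_1$. A short standard check (positivity and the isometric isomorphism property are inherited componentwise; self-adjointness of a diagonal direct sum of self-adjoint operators follows from writing out its adjoint coordinatewise) shows that $J$ is the generating operator of the admissible pair $[X_0,X_1]$.

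Next, I would invoke the fact that the Borel functional calculus commutes with orthogonal direct sums of self-adjoint operators: the spectral measure $E$ of $J$ satisfies $E(\Delta)=\bigoplus_{j=1}^{r}E_j(\Delta)$, where $E_j$ is the spectral measure of $J_j$. Consequently, for the Borel function $\psi\in\mathcal{B}$ one has $\psi(J)=\bigoplus_{j=1}^{r}\psi(J_j)$ with domain $\bigoplus_{j=1}^{r}\mathrm{Dom}\,\psi(J_j)$. From this the statement is immediate: a vector $u=(u_1,\ldots,u_r)\in X_0$ lies in $\mathrm{Dom}\,\psi(J)$ if and only if each $u_j\in \mathrm{Dom}\,\psi(J_j)=[X_0^{(j)},X_1^{(j)}]_\psi$, and then
$$
\|u\|^2_{[X_0,X_1]_\psi}=\|\psi(J)u\|^2_{X_0}=\sum_{j=1}^{r}\|\psi(J_j)u_j\|^2_{X_0^{(j)}}=\sum_{j=1}^{r}\|u_j\|^2_{[X_0^{(j)},X_1^{(j)}]_\psi},
$$
which is precisely the desired equality of norms on $\bigoplus_{j=1}^{r}[X_0^{(j)},X_1^{(j)}]_\psi$.

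The only slightly technical point is the functional-calculus identity $\psi(J)=\bigoplus_{j}\psi(J_j)$, which I view as the main (though standard) obstacle. I would handle it by first observing that for any bounded Borel function it is trivial, since both sides act coordinatewise as multiplication by the same bounded operator; for a general $\psi\in\mathcal{B}$, truncate $\psi$ by $\psi_N:=\min\{\psi,N\}$ and pass to the limit using monotone convergence of the quadratic forms $\|\psi_N(J)\cdot\|^2_{X_0}$ and $\sum_j\|\psi_N(J_j)\cdot\|^2_{X_0^{(j)}}$, whose limits characterize the respective domains of $\psi(J)$ and $\bigoplus_j\psi(J_j)$. Once this identification is in hand, everything else is bookkeeping.
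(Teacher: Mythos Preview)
Your argument is correct and is the standard one. Note, however, that the paper does not prove this proposition at all: it is quoted from \cite[Theorem~1.5]{MikhailetsMurach14} and used as a black box. So there is nothing in the paper to compare against; your write-up supplies exactly the proof one would expect, via the identification of the generating operator of the direct-sum pair with $\bigoplus_j J_j$ and the compatibility of the Borel functional calculus with orthogonal direct sums.
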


The second property shows that this interpolation preserves the Fredholm property of the bounded operators that have the same defect (see, e.g., \cite[Theorem~1.7]{MikhailetsMurach14}).

\begin{proposition}\label{p4}
Let  $X=[X_{0},X_{1}]$ and $Y=[Y_{0},Y_{1}]$ be admissible pairs of Hilbert spaces, and let $\psi\in\mathcal{B}$ be an interpolation parameter. Suppose that a linear mapping $T$ is given on $X_{0}$ and satisfies the following property: the restrictions of $T$ to the spaces $X_j$, where $j=0,\,1$, are Fredholm bounded operators $T:X_{j}\rightarrow Y_{j}$ that have a common kernel and  the same index. Then the restriction of $T$ to the space $X_{\psi}$ is a Fredholm bounded operator $T:X_{\psi}\rightarrow Y_{\psi}$ with the same kernel and index and, besides, $T(X_{\psi}) = Y_{\psi}\cap T(X_{\,0})$.
\end{proposition}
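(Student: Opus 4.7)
The plan is to argue in four stages. Boundedness of $T:X_\psi\to Y_\psi$ is immediate from $\psi$ being an interpolation parameter applied to the hypothesis that $T$ is bounded $X_j\to Y_j$ for $j=0,1$. The kernel on $X_\psi$ equals the common kernel $N$: by the embeddings \eqref{3.18}, $N\subset X_1\subset X_\psi\subset X_0$, so $N\subset\ker(T|_{X_\psi})\subset\ker(T|_{X_0})=N$. The nontrivial task is to show that $T:X_\psi\to Y_\psi$ is Fredholm with the claimed range and index, and for this I would construct a parametrix valid simultaneously at both endpoints.

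The key auxiliary identity is $T(X_0)\cap Y_1=T(X_1)$. Let $d$ denote the common value of $\mathrm{codim}_{Y_j}T(X_j)$ for $j=0,1$ (a single number because kernels and indices coincide). The continuous linear map $Y_1\to Y_0/T(X_0)$ induced by inclusion has kernel $Y_1\cap T(X_0)$ and dense image (since $Y_1$ is dense in $Y_0$), which being dense in a finite-dimensional space is the whole space; hence $\dim Y_1/(Y_1\cap T(X_0))=d$. Combined with $T(X_1)\subset Y_1\cap T(X_0)$ and $\dim Y_1/T(X_1)=d$, this forces equality.

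Now pick any complement $M$ of $T(X_1)$ in $Y_1$. Since $M\cap T(X_0)=M\cap Y_1\cap T(X_0)=M\cap T(X_1)=\{0\}$ and $\dim M=d$, $M$ is also a complement of $T(X_0)$ in $Y_0$. Let $P$ be the $X_0$-orthogonal projection onto $N$: finite-dimensionality of $N$ and $N\subset X_1$ make $P$ a bounded projection on $X_1$ too, and hence on $X_\psi$ by interpolation. Let $Q:Y_0\to Y_0$ be the projection onto $M$ parallel to $T(X_0)$, bounded because $T(X_0)$ is closed and $M$ is finite-dimensional; since $Y_1\cap T(X_0)=T(X_1)$, $Q$ restricts on $Y_1$ to the projection onto $M$ parallel to $T(X_1)$, so by interpolation $Q$ is bounded on $Y_\psi$. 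By the open mapping theorem the restriction $T:(I-P)(X_0)\to T(X_0)$ is a Banach isomorphism with inverse $S$; the parametrix $R:=S(I-Q)$ is then bounded both $Y_0\to X_0$ and $Y_1\to X_1$, and the two agree on $Y_1$. Interpolation lifts $R$ to a bounded operator $Y_\psi\to X_\psi$, and the identities $RT=I-P$ on $X_\psi$ and $TR=I-Q$ on $Y_\psi$ pass by continuity.

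From $TR=I-Q$ on $Y_\psi$ one reads $Y_\psi=T(X_\psi)+M$ with $T(X_\psi)\cap M\subset T(X_0)\cap M=\{0\}$, so $Y_\psi=T(X_\psi)\oplus M$; this gives closedness and codimension $d$ of $T(X_\psi)$, the Fredholm property with index $\dim N-d$ matching the endpoints, and finally $T(X_\psi)=\ker(Q|_{Y_\psi})=Y_\psi\cap T(X_0)$. The main obstacle is the auxiliary identity $T(X_0)\cap Y_1=T(X_1)$: this is precisely where the \emph{same index} hypothesis genuinely enters, and without it one could not transfer the finite-dimensional complement $M$ from level $1$ to level $0$, on which the entire parametrix construction rests.
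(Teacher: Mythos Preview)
Your argument is correct. The paper itself does not prove Proposition~\ref{p4}; it is stated as an auxiliary fact with the reference ``see, e.g., \cite[Theorem~1.7]{MikhailetsMurach14}'' and is then used in Section~\ref{sec8} to deduce Theorem~\ref{t2}. So there is no in-paper proof to compare against.

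That said, your proof follows the standard parametrix route and is sound. The crucial step is the identity $T(X_0)\cap Y_1=T(X_1)$, which you obtain cleanly from the density of $Y_1$ in $Y_0$ and the equality of codimensions; this is exactly where the ``same index and common kernel'' hypothesis is consumed. Once a single finite-dimensional complement $M\subset Y_1$ works at both endpoints, your projections $P$ and $Q$ are bounded at both levels for the reasons you give (finite rank onto $N\subset X_1$ for $P$; closed kernel plus finite-dimensional range for $Q$, with $Q|_{Y_1}$ being the projection along $T(X_1)$ by the key identity). The verification that $S$ maps $T(X_1)$ back into $X_1$ --- because $S(Tx)=(I-P)x$ and $(I-P)x\in X_1$ whenever $x\in X_1$ --- is the one point a reader might pause on, and you have it. The identities $RT=I-P$ and $TR=I-Q$ then hold on $X_0$ and $Y_0$ exactly (not just up to compacts), so they transfer to $X_\psi$ and $Y_\psi$ by restriction, and the conclusions $Y_\psi=T(X_\psi)\oplus M$ and $T(X_\psi)=Y_\psi\cap T(X_0)$ follow as you wrote.
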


The third property reduces the interpolation between the dual or antidual spaces of given Hilbert spaces to the interpolation between these given spaces (see \cite[Theorem~1.4]{MikhailetsMurach14}). We need this property in the case of antidual spaces. If $H$ is a Hilbert space, then $H'$ stands for the antidual of $H$; namely, $H'$ consists of all antilinear continuous functionals $l: H \rightarrow \mathbb{C}$. The linear space $H'$ is Hilbert with respect to the inner product  $(l_1, l_2)_{H'} := (v_1, v_2)_{H}$ of functionals $l_1, l_2\in H'$; here $v_j$, with $j\in \{1, 2\}$, is a unique vector from $H$ such that $l_j(w) = (v_j, w)_H$ for every $w\in H$. Note that we do not identify $H$ and $H'$ on the base of the Riesz theorem (according to which  $v_j$ exists).

\begin{proposition}\label{p5}
Let a function $\psi\in\mathcal{B}$ be such that the function $\psi(t)/t$ is bounded in a neighbourhood  of infinity. Than for every admissible pair $[X_0, X_1]$ of Hilbert spaces we have the equality  $[X_1', X_0']_\psi = [X_0, X_1]'_\chi$ with equality of norms. Here, the function $\chi\in\mathcal{B}$ is defined by the formula $\chi(t):=t/\psi(t)$ for $t>0$. If $\psi$ is an interpolation parameter, then $\chi$ is an interpolation parameter as well.
\end{proposition}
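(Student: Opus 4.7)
The approach is to realise both sides of the claimed identity as the same subspace of $X_0$ via a canonical isometric isomorphism $U\colon X_1'\to X_0$ that intertwines the generating operators of $[X_1',X_0']$ and $[X_0,X_1]$. Let $J$ be the generating operator of $[X_0,X_1]$. For $l\in X_1'$ the Riesz theorem in $X_1$ gives a unique $u\in X_1$ with $l(w)=(u,w)_{X_1}=(Ju,Jw)_{X_0}$; set $U(l):=Ju$. Then $\|U(l)\|_{X_0}=\|u\|_{X_1}=\|l\|_{X_1'}$, and if $l\in X_0'$ is the Riesz representation of $g\in X_0$, a short manipulation of the two inner products gives $U(l)=J^{-1}g\in X_1$. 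Hence $U\colon X_1'\leftrightarrow X_0$ is an isometric isomorphism that restricts to an isometric isomorphism $X_0'\leftrightarrow X_1$, so $U$ carries the admissible pair $[X_1',X_0']$ onto $[X_0,X_1]$.

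The generating operator $\tilde J$ of $[X_1',X_0']$ acts on $l\in X_0'$ by $(\tilde J l)(w)=l(Jw)$, and directly from the definition of $U$ one reads off the intertwining $U\tilde J=JU$. The Borel functional calculus then yields $U\psi(\tilde J)=\psi(J)U$, so $U$ maps $[X_1',X_0']_\psi=\mathrm{dom}\,\psi(\tilde J)$ isometrically onto $X_\psi=\mathrm{dom}\,\psi(J)$.

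It remains to show that $(X_\chi)'$, viewed as a subspace of $X_1'$, is also mapped by $U$ to $X_\psi$ with equal norm. The hypothesis that $\psi(t)/t$ is bounded near $+\infty$ combined with $\psi\in\mathcal{B}$ implies $\chi\in\mathcal{B}$, that $1/\psi(J)$ and $(\psi/t)(J)$ are bounded self-adjoint operators on $X_0$, and that $X_1\subset X_\chi\subset X_0$ with dense continuous embeddings (density of $X_1$ in $X_\chi$ follows from the spectral cut-offs $E_{[c,n]}u$ for $u\in X_\chi$, where $J\geq cI$). For $l\in(X_\chi)'$ the Riesz theorem in $X_\chi$ yields $u''\in X_\chi$ with $l(w)=(\chi(J)u'',\chi(J)w)_{X_0}$ and $\|l\|_{(X_\chi)'}=\|\chi(J)u''\|_{X_0}$. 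The key spectral identity $\chi(J)w=(1/\psi(J))(Jw)$ for $w\in X_1$ (which comes from $\chi(t)/t=1/\psi(t)$ and the boundedness of $1/\psi(J)$), together with self-adjointness of $1/\psi(J)$, lets me rewrite $l(w)=((1/\psi(J))\chi(J)u'',Jw)_{X_0}$ for $w\in X_1$; comparing with $l(w)=(U(l),Jw)_{X_0}$ forces $U(l)=(1/\psi(J))\chi(J)u''$, and the spectral cancellation $\psi\cdot(1/\psi)=1$ then gives $\psi(J)U(l)=\chi(J)u''\in X_0$. Therefore $U(l)\in X_\psi$ and $\|U(l)\|_{X_\psi}=\|\chi(J)u''\|_{X_0}=\|l\|_{(X_\chi)'}$. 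Conversely, for $l\in X_1'$ with $U(l)\in X_\psi$ I set $u'':=(\psi/t)(J)\psi(J)U(l)$, and the same identity shows $u''\in X_\chi$ and that $\ell_0(w):=(u'',w)_{X_\chi}$ lies in $(X_\chi)'$ with $\ell_0|_{X_1}=l|_{X_1}$; by density of $X_1$ in $X_\chi$ this forces $l\in(X_\chi)'$ and $l=\ell_0$.

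The last assertion, that $\chi$ is an interpolation parameter whenever $\psi$ is, follows from the Peetre criterion recalled in the excerpt: pseudoconcavity of $\psi$ near $+\infty$ is (classically) equivalent to $\psi$ being equivalent to a non-decreasing function with $\psi(t)/t$ non-increasing; but then $\chi(t)=t/\psi(t)=1/(\psi(t)/t)$ is non-decreasing and $\chi(t)/t=1/\psi(t)$ is non-increasing, so $\chi$ itself has these two properties and is therefore pseudoconcave near $+\infty$. The main technical obstacle throughout is managing domains in the spectral calculus of the possibly unbounded operators $\chi(J)$, $\psi(J)$, and $\chi(J)^{-1}$; the boundedness of $1/\psi(J)$ and $(\psi/t)(J)$, which is exactly what the hypothesis $\psi(t)/t=O(1)$ near $+\infty$ buys us, is precisely what legitimises the cancellations $\psi\cdot(1/\psi)=1$ and $\chi\cdot(1/\chi)=1$ on the relevant vectors.
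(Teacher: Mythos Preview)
The paper does not give its own proof of this proposition; it is quoted without argument as an auxiliary result from \cite[Theorem~1.4]{MikhailetsMurach14}. So there is nothing in the paper to compare your proof against.

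That said, your proof is correct and self-contained. The isometric isomorphism $U:X_1'\to X_0$, defined by sending $l\in X_1'$ to $Ju$ where $u$ is the $X_1$-Riesz representer of $l$, does carry the admissible pair $[X_1',X_0']$ onto $[X_0,X_1]$ and intertwines the generating operators, so $U$ takes $[X_1',X_0']_\psi$ isometrically onto $X_\psi$. Your identification of $(X_\chi)'$ with $X_\psi$ under the same $U$ is also sound: the hypothesis that $\psi(t)/t$ is bounded near $+\infty$, together with the fact that $1/\psi$ is bounded on every $[r,\infty)$ (built into the definition of $\mathcal{B}$), is precisely what makes $1/\psi(J)$ and $(\psi/t)(J)$ bounded operators on $X_0$, and this is exactly what is needed to justify the domain manipulations and the spectral cancellations $\psi\cdot(1/\psi)=1$ and $\chi\cdot(\psi/t)=1$ on the relevant vectors. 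The density of $X_1$ in $X_\chi$ via spectral cut-offs is routine, and your deduction that $\chi$ is again an interpolation parameter from the pseudoconcavity criterion (via the classical equivalence with ``$\psi$ nondecreasing and $\psi(t)/t$ nonincreasing up to equivalence'') is standard. The only point you pass over quickly is that your formula $(\tilde J l)(w)=l(Jw)$ really does define the generating operator of $[X_1',X_0']$; but since $U$ is an isometric isomorphism of the pairs and $U\tilde J=JU$, the operator $\tilde J=U^{-1}JU$ is automatically positive self-adjoint in $X_1'$ with domain $X_0'$, so this is immediate.
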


In view of this theorem we note that if $[X_0, X_1]$ is an admissible pair of Hilbert spaces, then the dual pair $[X_1', X_0']$ is also admissible provided that we identify functions from $X_0'$ with their restrictions on $X_1$.

\section{Proofs of properties of the refined Sobolev scale}\label{sec7}

In this section we will prove Theorems  \ref{t6}--\ref{t10}.

\begin{proof}[Proof of Theorem $\ref{t6}$]
Let $s\in \mathbb{R}$.
As is known \cite[p.~110]{Wells}, the pair of Sobolev spaces on the left of equality \eqref{3.1} is admissible.
We will deduce this equality from Proposition~\ref{p1} with the help of some operators of flattening  and sewing
of the vector bundle $\pi: V \rightarrow\Gamma$.

We define the flattening operator by the formula
\begin{equation}\label{7.2}
T: u\mapsto (u_{1,1}, \ldots, u_{1,p}, \ldots, u_{\varkappa,1}, \ldots, u_{\varkappa,p}) \quad \mbox{for arbitrary} \quad u\in C^\infty(\Gamma, V).
\end{equation}
Here, each function $u_{j, k} \in C_0^\infty(\mathbb{R}^n)$ is defined by formula \eqref{1.134}.
According to \eqref{1.34}, the norm of $u$ in the space $H^{s,\varphi}(\Gamma, V)$ is equal to the norm of $Tu$ in the Hilbert space $(H^{s,\varphi}(\mathbb{R}^n))^{p\varkappa}$. Therefore the linear mapping $u\mapsto Tu$, with $u\in C^\infty(\Gamma, V)$, extends continuously to the isometric operator
\begin{equation}\label{3.3}
T: H^{s,\varphi}(\Gamma, V)\rightarrow (H^{s,\varphi}(\mathbb{R}^n))^{p\varkappa}.
\end{equation}
Besides, this mapping extends continuously to the isometric operators
\begin{equation}\label{3.2}
T: H^\sigma(\Gamma, V)\rightarrow (H^\sigma(\mathbb{R}^n))^{p\varkappa}, \quad \mbox{with} \quad \sigma \in \mathbb{R},
\end{equation}
between Sobolev spaces. Since $\psi$ is an interpolation parameter, it follows from the boundedness of the linear operators \eqref{3.2} with $\sigma \in \{s-\varepsilon, s+\delta\}$ that the restriction of the operator \eqref{3.2} with $\sigma = s-\varepsilon$ is a bounded operator
 \begin{equation}\label{3.4}
T: \bigl[H^{s-\varepsilon}(\Gamma, V),H^{s+\delta}(\Gamma, V)\bigr]_{\psi}\rightarrow \bigl[(H^{s-\varepsilon}(\mathbb{R}^n))^{p\varkappa}, (H^{s+\delta}(\mathbb{R}^n))^{p\varkappa}\bigr]_{\psi}.
\end{equation}
Owing to Propositions~\ref{p1} and \ref{p2}, the target space of \eqref{3.4} takes the form
\begin{equation}\label{3.5}
\begin{aligned}
\bigl[(H^{s-\varepsilon}(\mathbb{R}^n))^{p\varkappa}, (H^{s+\delta}(\mathbb{R}^n))^{p\varkappa}\bigr]_{\psi} & = \bigl([H^{s-\varepsilon}(\mathbb{R}^n),H^{s+\delta}(\mathbb{R}^n)]_{\psi}\bigr)^{p\varkappa} \\
&= \bigl(H^{s,\varphi}(\mathbb{R}^n)\bigr)^{p\varkappa}.
\end{aligned}
\end{equation}
Thus, \eqref{3.4} is a bounded operator between the spaces
\begin{equation}\label{3.6}
T: [H^{s-\varepsilon}(\Gamma, V),H^{s+\delta}(\Gamma, V)]_{\psi}\rightarrow \bigl(H^{s,\varphi}(\mathbb{R}^n)\bigr)^{p\varkappa}.
\end{equation}

Consider now the mapping of sewing
\begin{equation}\label{3.7}
K: (w_{1,1}, \ldots, w_{1,p}, \ldots, w_{\varkappa,1}, \ldots, w_{\varkappa,p}) \mapsto \sum_{j=1}^\varkappa w_j
\end{equation}
defined on vectors
\begin{equation}\label{3.7a}
\mathbf{w} := (w_{1,1}, \ldots, w_{1,p}, \ldots, w_{\varkappa,1}, \ldots, w_{\varkappa,p}) \in (C^\infty_0(\mathbb{R}^n))^{p\varkappa}.
\end{equation}
Here, for each $j\in\{1, \ldots, \varkappa\}$,  the section $w_j\in C^\infty(\Gamma, V)$ is defined by the formula
\begin{equation}\label{3.7b}
w_j (x) := \left\{
        \begin{array}{ll}
         \beta_j^{-1}\bigl(x, (\eta_j w_{j,1})(\alpha_j^{-1}(x)), \ldots, (\eta_j w_{j,p})(\alpha_j^{-1}(x))\bigr) & \hbox{if\;\; $x\in \Gamma_j$,} \\
          0 & \hbox{if\;\; $x\in \Gamma\setminus\Gamma_j$,}
        \end{array}
      \right.
\end{equation}
in which the function $\eta_{j}\in C_0^{\infty}(\mathbb{R}^{n})$ is chosen so that  $\eta_{j} = 1$ on the set
$\alpha_{j}^{-1}(\mathrm{supp}\,\chi_{j})$.

We have the linear mapping
\begin{equation}\label{3.10}
K: (C^\infty_0(\mathbb{R}^n))^{p\varkappa} \to C^\infty(\Gamma, V).
\end{equation}
It is left inverse to the flattening mapping \eqref{7.2}. Indeed, given $u\in C^{\infty}(\Gamma, V)$, we write
\begin{equation*}
KTu = K(u_{1,1}, \ldots, u_{1,p}, \ldots, u_{\varkappa,1}, \ldots, u_{\varkappa,p}) = \sum_{j=1}^\varkappa u_j,
\end{equation*}
where each section $u_j\in C^\infty(\Gamma, V)$ is defined by formula \eqref{3.7b} with $u$ instead of $w$. In this formula, for arbitrary $k\in\{1, \ldots, p\}$ and $x\in \Gamma_j$, we have the equalities
\begin{align*}
(\eta_j u_{j,k})(\alpha_j^{-1}(x)) = \bigl(\eta_j(\alpha_j^{-1}(x))\bigr)\,\Pi_k\bigl(\beta_j ((\chi_j u)(x))\bigr) = \Pi_k\bigl(\beta_j ((\chi_j u)(x))\bigr)
\end{align*}
due to our choice of $\eta_j$. Therefore
\begin{align*}
u_j (x) & = \beta_j^{-1}\bigl(x,(\eta_j u_{j,1})(\alpha_j^{-1}(x)), \ldots, (\eta_j u_{j,p})(\alpha_j^{-1}(x))\bigr) \\
& = \beta_j^{-1}\bigl(x,\Pi_1\bigl(\beta_j ((\chi_j u)(x))\bigr), \ldots, \Pi_p\bigl(\beta_j ((\chi_j u)(x))\bigr)\bigr) \\
& = (\chi_j u)(x)
\end{align*}
for every $x\in\Gamma_j$. Note that if $x\in\Gamma\setminus\Gamma_j$, then $u_j (x) = 0 = (\chi_j u)(x)$. Thus,  $u_j (x) =  (\chi_j u)(x)$ for arbitrary $x\in\Gamma$. Hence,
\begin{equation}\label{3.8}
KTu = \sum_{j=1}^\varkappa u_j = \sum_{j=1}^\varkappa \chi_j u = u \quad \mbox{for every} \quad u\in C^{\infty}(\Gamma, V).
\end{equation}

Let us prove that the mapping \eqref{3.10} extends uniquely to a linear bounded operator between the spaces $(H^{s,\varphi}(\mathbb{R}^n))^{p\varkappa}$ and $H^{s,\varphi}(\Gamma, V)$. Given a vector \eqref{3.7a}, we write
\begin{equation}\label{3.8b}
\begin{aligned}
\| K\mathbf{w} \|_{s,\varphi;\Gamma, V}^2 &=
\sum_{l=1}^\varkappa\sum_{k=1}^p \|\Pi_k\bigl(\beta_l\circ(\chi_l K\mathbf{w})\circ\alpha_l\bigr)\|_{s,\varphi;\mathbb{R}^n}^2
\\
&
= \sum_{l=1}^\varkappa\sum_{k=1}^p \,\biggl\|\sum_{j=1}^\varkappa \Pi_k\bigl(\beta_l\circ(\chi_l w_j)\circ\alpha_l\bigr)\biggr\|_{s,\varphi;\mathbb{R}^n}^2.
\end{aligned}
\end{equation}

Examine the function  $(\chi_l w_j)\circ\alpha_l: \mathbb{R}^n \mapsto \pi^{-1}(\Gamma_l)$  with
$l,j \in \{1, \ldots, \varkappa\}$. If $t\in \mathbb{R}^n$ satisfies $\alpha_l(t)\in\Gamma_j$, then
\begin{align*}
((\chi_l w_j)\circ\alpha_l)(t) = & (\chi_l \circ\alpha_l)(t)\!\cdot\!(w_j\circ\alpha_l)(t) \\
= &
(\chi_l \circ\alpha_l)(t) \!\cdot\! \beta_j^{-1}\bigl(\alpha_l(t), (\eta_j w_{j,1})((\alpha_j^{-1}\circ\alpha_l)(t)), \ldots, (\eta_j w_{j,p})((\alpha_j^{-1}\circ\alpha_l)(t))\bigr)
\\
= &
\beta_j^{-1}\bigl(\alpha_l(t),(\chi_l \circ\alpha_l)(t) \!\cdot\! (\eta_j w_{j,1})((\alpha_j^{-1}\circ\alpha_l)(t)), \ldots,
\\
&\qquad\qquad\;\;
 (\chi_l \circ\alpha_l)(t) \!\cdot\!(\eta_j w_{j,p})((\alpha_j^{-1}\circ\alpha_l)(t))\bigr)
 \\
= &
\beta_j^{-1}\bigl(\alpha_l(t),((\eta_{j,l} w_{j,1})\circ\alpha_{j,l})(t), \ldots, ((\eta_{j,l} w_{j,p})\circ\alpha_{j,l})(t)\bigr).
\end{align*}
Here, $\eta_{j,l} := (\chi_l \circ\alpha_j)\eta_j \in C^\infty_0(\mathbb{R}^n)$, whereas $\alpha_{j,l}: \mathbb{R}^n \leftrightarrow \mathbb{R}^n$ is an infinitely smooth diffeomorphism such that  $\alpha_{j,l} := \alpha_j^{-1}\circ\alpha_l$ in a neighbourhood of $\mathrm{supp}\, \eta_{j,l}$ and that $\alpha_{j,l}(t) = t$ whenever $|t|\gg1$. Then, given $k\in\{1, \ldots, p\}$, we have the equalities
\begin{align*}
&\Pi_k\bigl(\beta_l\circ(\chi_l w_j)\circ\alpha_l\bigr)(t)\\
= & \Pi_k(\beta_l\circ\beta_j^{-1})\bigl(\alpha_l(t),((\eta_{j,l} w_{j,1})\circ\alpha_{j,l})(t), \ldots, ((\eta_{j,l} w_{j,p})\circ\alpha_{j,l})(t)\bigr) \\
= & \sum_{r=1}^p  \beta_{l,j}^{k,r}(\alpha_l(t))\!\cdot\!((\eta_{j,l} w_{j,r})\circ\alpha_{j,l})(t).
\end{align*}
Here, each $\beta_{l,j}^{k,r}$ is a certain complex-valued function from $C^\infty(\Gamma)$ such that the matrix-valued function $(\beta_{l,j}^{k,r}(x))^p_{k,r=1}$ of $x\in \mathrm{supp}\,\chi_l\cap\mathrm{supp} (\eta_j\circ\alpha_j^{-1})$ corresponds to the transition mapping $\beta_l\circ\beta_j^{-1}$. Thus,
\begin{equation}\label{3.12}
\Pi_k\bigl(\beta_l\circ(\chi_l w_j)\circ\alpha_l\bigr)(t) = \sum_{r=1}^p  \beta_{l,j}^{k,r}(\alpha_l(t))\!\cdot\!((\eta_{j,l} w_{j,r})\circ\alpha_{j,l})(t)
\end{equation}
for arbitrary $t\in \mathbb{R}$ (if $\alpha_l(t)\not\in\Gamma_j$, then this equality becomes $0=0$).

Owing to \eqref{3.8b} and \eqref{3.12} we write
\begin{align*}
\| K\mathbf{w} \|_{s,\varphi;\Gamma, V}^2 & = \sum_{l=1}^\varkappa\sum_{k=1}^p \,\biggl\|\sum_{j=1}^\varkappa \sum_{r=1}^p  (\beta_{l,j}^{k,r}\circ\alpha_l)\!\cdot\!((\eta_{j,l} w_{j,r})\circ\alpha_{j,l})
\biggr\|_{s,\varphi;\mathbb{R}^n}^2\\
& = \sum_{l=1}^\varkappa\sum_{k=1}^p \,\biggl\|\sum_{j=1}^\varkappa \sum_{r=1}^p  \eta_{j,l}^{k,r}\!\cdot\!( w_{j,r}\circ\alpha_{j,l})
\biggr\|_{s,\varphi;\mathbb{R}^n}^2;
\end{align*}
here, each
$$
\eta_{j,l}^{k,r}:=(\beta_{l,j}^{k,r}\circ\alpha_l)\!\cdot\!(\eta_{j,l}\circ\alpha_{j,l})\in C^\infty_0(\mathbb{R}^n).
$$
Thus
\begin{equation}\label{3.13}
\| K\mathbf{w} \|_{s,\varphi;\Gamma, V}^2 \leq \sum_{l=1}^\varkappa\sum_{k=1}^p \biggl(\sum_{j=1}^\varkappa \sum_{r=1}^p\,\|  \eta_{j,l}^{k,r}\!\cdot\!( w_{j,r}\circ\alpha_{j,l})
\|_{s,\varphi;\mathbb{R}^n}\biggr)^2.
\end{equation}

As is known \cite[Theorem B.1.7, B.1.8]{Hermander87}, the operator of change of variables $v \mapsto v\circ\alpha_{j,l}$ and the operator of the multiplication by a function from $C^\infty_0(\mathbb{R}^n)$ are bounded on each Sobolev space $H^{\sigma}(\mathbb{R}^n)$ with $\sigma\in \mathbb{R}$. Therefore the linear operator
$v\mapsto \eta_{j,l}^{k,r}\!\cdot\!(v\circ\alpha_{j,l})$ is bounded on $H^{\sigma}(\mathbb{R}^n)$. Hence, owing to Proposition~\ref{p1}, this operator is also bounded on the H\"ormander space $H^{s,\varphi}(\mathbb{R}^n)$. Thus, formula \eqref{3.13} implies that
\begin{equation}\label{3.16}
\| K\mathbf{w} \|_{s,\varphi;\Gamma, V}^2 \leq c\sum_{j=1}^\varkappa \sum_{r=1}^p \,\|  w_{j,r}\|_{s,\varphi;\mathbb{R}^n}^2
\end{equation}
for a certain number $c>0$ that does not depend on $\mathbf{w}\in (C^\infty_0(\mathbb{R}^n))^{p\varkappa}$. Therefore the mapping \eqref{3.10} extends uniquely (by continuity) to a linear bounded operator
\begin{equation}\label{3.14}
K: (H^{s,\varphi}(\mathbb{R}^n))^{p\varkappa}\to H^{s,\varphi}(\Gamma, V).
\end{equation}
Besides, this mapping extends uniquely to a bounded linear operator
\begin{equation}\label{3.14b}
K: ((H^{\sigma}(\mathbb{R}^n))^{p\varkappa} \rightarrow H^\sigma(\Gamma, V) \quad \mbox{for every}\quad \sigma \in \mathbb{R}^n.
\end{equation}
Taking here $\sigma \in \{ s_0, s_1\}$ and using the interpolation with the function parameter $\psi$, we conclude that the restriction of the operator \eqref{3.14b} with $\sigma = s_0$ to the space \eqref{3.5} is a bounded operator
\begin{equation}\label{3.15}
K: (H^{s,\varphi}(\mathbb{R}^n))^{p\varkappa}  \rightarrow [H^{s-\varepsilon}(\Gamma, V),H^{s+\delta}(\Gamma, V)]_{\psi}.
\end{equation}

It follows from equality \eqref{3.8} and from the boundedness of operators \eqref{3.15} and \eqref{3.3} that
\begin{equation*}
\|u\|_{X_\psi} = \|KTu\|_{X_\psi} \leq c_1 \|u\|_{s,\varphi; \Gamma, V} \quad \mbox{for every}\quad u\in C^\infty(\Gamma, V),
\end{equation*}
where $c_1$ is the norm of the product of these operators, and
\begin{equation*}
X_\psi := [H^{s-\varepsilon}(\Gamma, V),H^{s+\delta}(\Gamma, V)]_{\psi}.
\end{equation*}
Besides, the boundedness of operators \eqref{3.14} and \eqref{3.6} implies that
\begin{equation*}
\|u\|_{s,\varphi; \Gamma, V} = \|KTu\|_{s,\varphi; \Gamma, V} \leq c_2\|u\|_{X_\psi}  \quad \mbox{for every}\quad u\in C^\infty(\Gamma, V),
\end{equation*}
with $c_2$ being the norm of the product of the last two operators. Thus, the norms in the spaces  $H^{s,\varphi} (\Gamma, V)$ and $X_\psi$ are equivalent on the linear manifold $C^\infty(\Gamma, V)$. Since this manifold is dense in these spaces, they are coincide up to equivalence of norms (the set $C^\infty(\Gamma, V)$ is dense in $X_\psi$ due to \eqref{3.18}).
\end{proof}

The proofs of Theorems~\ref{t1}--\ref{t9} are quite similar to the proofs of assertions (i), (iii) and (v) of Theorem 2.3 from monograph \cite[Section 2.1.2]{MikhailetsMurach14}, where the case of trivial vector bundle of rank $p=1$ is considered. We will give these proofs for the sake of the readers convenience and completeness of the presentation.

\begin{proof}[Proof of Theorem $\ref{t1}$]
Consider two triplets $\mathcal{A}_{1}$ and $\mathcal{A}_{2}$ each of which is formed by an atlas of the manifolod $\Gamma$, appropriate partition of unity on $\Gamma$, and collection of local trivializations of the total space $V$. Let $H^{s,\varphi}(\Gamma, V; \mathcal{A}_{j})$ and $H^{\sigma}(\Gamma, V; \mathcal{A}_{j})$ respectively denote the H\"ormander space $H^{s,\varphi}(\Gamma, V)$ and the Sobolev space $H^{\sigma}(\Gamma, V)$ corresponding to the triplet $\mathcal{A}_{j}$ with $j\in\{1,\,2\}$.
The conclusion of Theorem~\ref{t1} holds true in  the Sobolev case of $\varphi\equiv1$ (see, e.g., \cite[p.~110]{Wells}). Hence, the identity mapping is an isomorphism
\begin{equation*}
I:H^{\sigma}(\Gamma, V;\mathcal{A}_{1})\leftrightarrow
H^{\sigma}(\Gamma, V; \mathcal{A}_{2})
\end{equation*}
for each $\sigma\in\mathbb{R}$. Considering this isomorphism for $\sigma := \{s-\varepsilon, s+\delta\}$ and using the interpolation with the function parameter $\psi$ defined by formula \eqref{3.30}, we conclude that the identity  mapping is an isomorphism
\begin{equation*}
I: [H^{s-\varepsilon}(\Gamma, V;\mathcal{A}_{1}), H^{s+\delta}(\Gamma, V;\mathcal{A}_{1})]_\psi \leftrightarrow
[H^{s-\varepsilon}(\Gamma, V;\mathcal{A}_{2}), H^{s+\delta}(\Gamma, V;\mathcal{A}_{2})]_\psi.
\end{equation*}
According to Theorem~\ref{t6},
\begin{equation*}
[H^{s-\varepsilon}(\Gamma, V;\mathcal{A}_{j}), H^{s+\delta}(\Gamma, V;\mathcal{A}_{j})]_\psi = H^{s,\varphi}(\Gamma, V;\mathcal{A}_{j})
\end{equation*}
for each $j\in\{1,\,2\}$ with equivalence of norms in the spaces. Thus, the spaces $H^{s,\varphi}(\Gamma, V;\mathcal{A}_{1})$ and $H^{s,\varphi}(\Gamma, V;\mathcal{A}_{2})$ are equal up to equivalence of norms.
\end{proof}

\begin{proof}[Proof of Theorem~$\ref{t8}$]
Let $\varepsilon>0$. According to Theorem~\ref{t6} there exist interpolation parameters $\chi, \eta \in \mathcal{B}$ such that
\begin{equation*}
[H^{s+\varepsilon/2}(\Gamma, V),H^{s+2\varepsilon}(\Gamma, V)]_{\chi}= H^{s+\varepsilon,\varphi_1}(\Gamma, V),
\end{equation*}
\begin{equation*}
[H^{s-\varepsilon}(\Gamma, V),H^{s+\varepsilon/3}(\Gamma, V)]_{\eta} = H^{s,\varphi}(\Gamma, V),
\end{equation*}
with equivalence of norms. Hence, owing to \eqref{3.18}, we have the continuous embeddings
\begin{equation*}
H^{s+\varepsilon, \varphi_1}(\Gamma, V) \hookrightarrow H^{s+\varepsilon/2}(\Gamma, V) \hookrightarrow H^{s+\varepsilon/3}(\Gamma, V) \hookrightarrow H^{s,\varphi}(\Gamma, V).
\end{equation*}
They are extensions by continuity of the identity mapping $u \mapsto u$, $u\in C^\infty(\Gamma, V)$.
Here, the middle embedding
\begin{equation*}
H^{s+\varepsilon/2}(\Gamma, V) \hookrightarrow H^{s+\varepsilon/3}(\Gamma, V)
\end{equation*}
is compact (see, e.g., \cite[Proposition~1.2]{Wells}). Therefore, the embedding
\begin{equation*}
H^{s+\varepsilon, \varphi_1}(\Gamma, V) \hookrightarrow H^{s,\varphi}(\Gamma, V)
\end{equation*}
is also compact.
\end{proof}

\begin{proof}[Proof of Theorem~$\ref{t9}$]
This theorem is known in the Sobolev case of $\varphi\equiv1$ (see, e.g., \cite[p.~110]{Wells}).
Hence, for every $\sigma\in\mathbb{R}$, the lineal mapping $Q: v \mapsto\langle v, \cdot\rangle_{\Gamma, V}$, with $v\in H^\sigma(\Gamma, V)$,  is an isomorphism $Q: H^{\sigma}(\Gamma, V)\leftrightarrow(H^{-\sigma}(\Gamma, V))'$.
Considering the latter for $\sigma= s \mp 1$ and using the interpolation with the function parameter $\psi$ defined by formula \eqref{3.30} with $\varepsilon=\delta=1$, we obtain  an isomorphism
\begin{equation}\label{7.3}
Q: [H^{s-1} (\Gamma, V), H^{s+1}(\Gamma, V)]_\psi \leftrightarrow [(H^{-s+1} (\Gamma, V))', (H^{-s-1}(\Gamma, V))']_\psi.
\end{equation}
Here,
\begin{equation*}
[H^{s-1} (\Gamma, V), H^{s+1}(\Gamma, V)]_\psi = H^{s, \varphi} (\Gamma, V)
\end{equation*}
by Theorem~\ref{t6}. Besides, according to Proposition~\ref{p5} we have
\begin{align*}
[(H^{-s+1} (\Gamma, V))', (H^{-s-1}(\Gamma, V))']_\psi &= [H^{-s-1} (\Gamma, V), H^{-s+1}(\Gamma, V)]_\chi' \\&= (H^{-s,1/\varphi}(\Gamma, V))'.
\end{align*}
The latter equality is true due to Theorem~\ref{t6} because $\chi(t):=t/\psi(t) = t^{1/2}/\varphi(t^{1/2})$ for $t\geq1$. Thus, isomorphism~\eqref{7.3} acts between the spaces
\begin{equation*}
Q:H^{s,\varphi}(\Gamma,V)\leftrightarrow
(H^{-s,1/\varphi} (\Gamma,V))'.
\end{equation*}
This means that the spaces $H^{s,\varphi} (\Gamma, V)$ and $H^{-s,1/\varphi} (\Gamma, V)$  are mutually dual (up to equivalence of norms) with respect to the sesquilinear form $\langle u,v\rangle_{\Gamma, V}$ of $u\in H^{s, \varphi} (\Gamma, V)$ and $v\in H^{-s, 1/\varphi} (\Gamma, V)$. This form is an extension by continuity of the form $\langle u,v\rangle_{\Gamma, V}$ of $u\in H^{s, \varphi} (\Gamma, V) \hookrightarrow H^{s-1} (\Gamma, V)$ and $v\in H^{-s+1} (\Gamma, V)$. Since the first form is continuous in their arguments separately, estimate~\eqref{4.333} holds true (see, e.g. \cite[Chapter~II, Section~4, Exercise~4]{DunfordSchwartz1958}).
\end{proof}

Our proof of the next Theorem~\ref{t10} is based on the following result.

\begin{proposition}\label{p3}
Let $\varphi\in\mathcal{M}$ and $0\leq q\in\mathbb{Z}$. Then condition \eqref{4.15}
implies the continuous embedding $H^{q+n/2,\varphi}(\mathbb{R}^n)\hookrightarrow C^q_\mathrm{b}(\mathbb{R}^n)$. Conversely, if
\begin{equation}\label{6.1}
\{w\in H^{q+n/2,\varphi}(\mathbb{R}^n): \mathrm{supp}\,w\subset G\} \subset C^q(\mathbb{R}^n)
\end{equation}
for some open nonempty set $G\subset\mathbb{R}^n$, then condition~\eqref{4.15} is satisfied.
\end{proposition}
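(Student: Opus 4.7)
The plan is to treat the two implications separately, using Fourier analysis in both.

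For the forward direction (assuming \eqref{4.15}), I would apply Fourier inversion together with Cauchy--Schwarz to each multi-index $\mu$ with $|\mu|\le q$. Writing
\begin{equation*}
\partial^\mu w(x)=(2\pi)^{-n/2}\int_{\mathbb{R}^n}(i\xi)^\mu\widehat w(\xi)\,e^{ix\cdot\xi}\,d\xi
\end{equation*}
and splitting the integrand as $\langle\xi\rangle^{q+n/2}\varphi(\langle\xi\rangle)|\widehat w(\xi)|$ times $|\xi^\mu|/(\langle\xi\rangle^{q+n/2}\varphi(\langle\xi\rangle))$, Cauchy--Schwarz yields
\begin{equation*}
|\partial^\mu w(x)|\le C\,\|w\|_{q+n/2,\varphi;\mathbb{R}^n}\cdot\biggl(\int_{\mathbb{R}^n}\frac{\langle\xi\rangle^{2|\mu|-2q-n}}{\varphi^2(\langle\xi\rangle)}\,d\xi\biggr)^{1/2}.
\end{equation*}
For $|\mu|\le q$ the second integral reduces, via polar coordinates, to a constant multiple of $\int_1^\infty dr/(r\varphi^2(r))$, which is finite by \eqref{4.15}. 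The same argument shows $(i\xi)^\mu\widehat w\in L^1(\mathbb{R}^n)$, so each $\partial^\mu w$ is continuous by Riemann--Lebesgue. Combining the two gives the continuous embedding $H^{q+n/2,\varphi}(\mathbb{R}^n)\hookrightarrow C^q_\mathrm{b}(\mathbb{R}^n)$.

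For the converse, my strategy is to turn \eqref{6.1} into a quantitative bound via the closed graph theorem and then probe it with a carefully calibrated sequence. Fix a compact set $K\subset G$ with nonempty interior, and let $H_K$ denote the closed subspace of $H^{q+n/2,\varphi}(\mathbb{R}^n)$ of distributions supported in $K$. Hypothesis \eqref{6.1} places $H_K$ into $C^q(\mathbb{R}^n)$; since the supports are compact, in fact into $C^q_\mathrm{b}(\mathbb{R}^n)$, and the closed graph theorem supplies a constant $C_0>0$ with $\|w\|_{(q);\mathbb{R}^n}\le C_0\|w\|_{q+n/2,\varphi;\mathbb{R}^n}$ for every $w\in H_K$. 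I would then test this inequality against
\begin{equation*}
w_N(x):=\eta(x)\,u_N(x-x_0),\qquad u_N(y):=\int_{1\le|\xi|\le N}\frac{(i\xi)^\mu e^{iy\cdot\xi}}{\langle\xi\rangle^{2q+n}\varphi^2(\langle\xi\rangle)}\,d\xi,
\end{equation*}
where $\eta\in C_0^\infty(\mathbb{R}^n)$ has support in $K$ and equals $1$ at an interior point $x_0$, and $\mu=(q,0,\ldots,0)$. The exponents are designed so that Plancherel, combined with boundedness of multiplication by $\eta$ on $H^{q+n/2,\varphi}(\mathbb{R}^n)$ (obtained by interpolating the Sobolev case via Proposition~\ref{p1}), gives $\|w_N\|_{q+n/2,\varphi;\mathbb{R}^n}^2\le C_1 I_N$, while a polar-coordinate computation of the leading Leibniz term at $x_0$ yields $|\partial^\mu w_N(x_0)|\ge c_2 I_N-C_3$, where $I_N:=\int_1^N dr/(r\varphi^2(r))$. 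Feeding these into the closed-graph inequality forces $c_2 I_N-C_3\le C_0\sqrt{C_1 I_N}$, and hence $I_N$ is bounded uniformly in $N$; this is precisely \eqref{4.15}.

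The hard part will be controlling the Leibniz cross terms $\partial^{\mu-\nu}\eta(x_0)\,\partial^\nu u_N(0)$ with $|\nu|<q$ in the lower bound for $|\partial^\mu w_N(x_0)|$. By the same polar reduction, each such $\partial^\nu u_N(0)$ is dominated by $c\int_1^\infty r^{|\nu|-q-1}\varphi^{-2}(r)\,dr$; since $|\nu|-q-1\le-2$, this integral converges by Potter's bound on the slowly varying function $1/\varphi^2$ (which restricts its growth to an arbitrarily small polynomial), so these terms stay uniformly bounded in $N$ and do not spoil the matching between $I_N$ on the left and $\sqrt{I_N}$ on the right of the tested inequality.
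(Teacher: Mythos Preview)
Your argument is correct in both directions. The paper does not actually prove this proposition: it simply records that the statement follows from H\"ormander's embedding theorem \cite[Theorem~2.2.7]{Hermander63}, in the manner of \cite[Theorem~1.15(iii)]{MikhailetsMurach14}. What you have written is, in effect, a self-contained version of that classical proof specialized to the weight $\langle\xi\rangle^{q+n/2}\varphi(\langle\xi\rangle)$.

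For the forward direction your Cauchy--Schwarz/Fourier-inversion computation is exactly the standard one; the only imprecision is the phrase ``reduces to a constant multiple of $\int_1^\infty dt/(t\varphi^2(t))$'' when $|\mu|<q$, where the integral is strictly smaller---but since it is bounded above by that quantity, the conclusion stands. For the converse, your closed-graph step is legitimate (convergence in $H_K$ and in $C^q_{\mathrm b}$ both imply distributional convergence, pinning down the limit), and your test family $w_N$ is well chosen: the identity $\partial^\mu u_N(0)=(-1)^q\int_{1\le|\xi|\le N}\xi_1^{2q}\langle\xi\rangle^{-2q-n}\varphi^{-2}(\langle\xi\rangle)\,d\xi$ gives the lower bound $c_2 I_N$, and the Potter bound on the slowly varying function $1/\varphi^2$ indeed makes the Leibniz cross terms $O(1)$ in~$N$. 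The resulting inequality $c_2 I_N - C_3 \le C_0\sqrt{C_1 I_N}$ forces $\sup_N I_N<\infty$, which is~\eqref{4.15}. This is somewhat more explicit than what the paper records, but it is the same mechanism underlying the cited H\"ormander result.
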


This proposition follows from H\"ormander's embedding theorem \cite[Theorem 2.2.7]{Hermander63} in the same way as \cite[Theorem~1.15(iii)]{MikhailetsMurach14}.

\begin{proof}[Proof of Theorem $\ref{t10}$]
Let us deduce Theorem~\ref{t10} from Proposition~\ref{p3}.
Suppose first that condition \eqref{4.15} is fulfilled.
Then for an arbitrary section $u\in C^\infty(\Gamma, V)$ we have the inequality
\begin{equation*}
\begin{aligned}
\|u\|_{(q);\Gamma,  V} &= \sum_{j=1}^\varkappa\sum_{k=1}^p\|u_{j,k}\|_{(q);\mathbb{R}^n}
\leq c \sum_{j=1}^\varkappa\sum_{k=1}^p\|u_{j,k}\|_{q+n/2,\varphi;\mathbb{R}^n}\\
&\leq 2^{(p\varkappa  - 1)/2} c \,\biggl(\sum_{j=1}^\varkappa\sum_{k=1}^p\|u_{j,k}\|^2_{q+n/2,\varphi;\mathbb{R}^n}\biggr)^{1/2} = 2^{(p\varkappa  - 1)/2} c \,\|u\|_{q+n/2,\varphi;\Gamma, V}.
\end{aligned}
\end{equation*}
Here, the first equality is due to \eqref{5.25}, and $c$ is the norm of the continuous embedding operator $H^{q+n/2,\varphi}(\mathbb{R}^n)\hookrightarrow C^{q}_{\mathrm{b}}(\mathbb{R}^n)$, which holds due to \eqref{4.15} and Proposition~\ref{p3}.
Hence, the identity mapping $I: u \mapsto u$, with $u\in C^\infty(\Gamma, V)$, extends uniquely (by continuity) to a linear bounded operator
\begin{equation}\label{7.1}
I: H^{q+n/2,\varphi}(\Gamma, V)\rightarrow C^{q}(\Gamma, V).
\end{equation}

If this operator is injective, then it sets the continuous embedding of  $H^{q+n/2,\varphi}(\Gamma, V)$ in $C^{q}(\Gamma, V)$.
Let us prove the injectivity of \eqref{7.1}. Consider the isometric flattening operator \eqref{3.3} with $s = q+n/2$. It is an extension by continuity of mapping \eqref{7.2}. This mapping is well defined on functions $u\in C^{q}(\Gamma, V)$ and sets an isometric operator
$$
T: C^{q}(\Gamma, V)\rightarrow (C^{q}_{\mathrm{b}}(\mathbb{R}^n))^{p\varkappa}.
$$
Therefore the equality $TIu = Tu$ extends by closer from functions $u\in C^\infty(\Gamma, V)$ to functions $u\in H^{q+n/2,\varphi}(\Gamma, V)$. Now, if a section $u\in H^{q+n/2,\varphi}(\Gamma, V)$ satisfies $Iu = 0$, then
$Tu = TIu = 0$ and therefore $u = 0$. Thus, operator \eqref{7.1} is injective, and hence it sets the continuous embedding
\begin{equation}\label{7.4}
H^{q+n/2,\varphi}(\Gamma, V) \hookrightarrow C^{q}(\Gamma, V).
\end{equation}

Let us now proof that this embedding is compact. Without loss of generality we may consider $\varphi\in\mathcal{M}$ as a continuous function on $[1,\infty)$. Indeed, as is knowing \cite[Section~1.4]{Seneta76}, there exists a continuous function  $\varphi_1 \in\mathcal{M}$ that both functions $\varphi/\varphi_1$ and  $\varphi_1/\varphi$ are bounded on $[1,\infty)$. Therefore the spaces $H^{q+n/2,\varphi}(\Gamma, V)$ and $H^{q+n/2,\varphi_1}(\Gamma, V)$ are equal up to equivalence of norms. Then we may use the second space instead of the first in our reasoning.

We put
\begin{equation}
\varphi_0(t) := \varphi(t) \Bigl(\int\limits_t^\infty \frac{dt}{t\varphi(t)}\Bigr)^{1/2}\quad \mbox{for arbitrary}\quad t\geq1.
\end{equation}
Owing to \cite[Lemma~1.4]{MikhailetsMurach14},  the function $\varphi_0$ belongs to $\mathcal{M}$ and has the following two properties:
$$
\lim_{t\rightarrow\infty} \frac{\varphi_0(t)}{\varphi(t)}\rightarrow0\quad\mbox{and}\quad
\int\limits_1^\infty \frac{dt}{t\varphi_0(t)}<\infty.
$$
It follows from the first property that we have the compact embedding
$$
H^{q+n/2,\varphi}(\Gamma, V)\hookrightarrow H^{q+n/2,\varphi_0}(\Gamma, V).
$$
This is demonstrated in the same way as in the proof of \cite[Theorem~2.3(iv)]{MikhailetsMurach14}.
According to the second property, the continuous embedding
$$
H^{q+n/2,\varphi_0}(\Gamma, V)\hookrightarrow C^{q}(\Gamma, V),
$$
holds true, as we have just proved. Hence, embedding~\eqref{7.4} is compact as a composition of compact and continuous embeddings.

It remains to prove that condition~\eqref{4.15} follows from embedding~\eqref{7.4}. Assume that this embedding holds true. Without loss of generality we may suppose that $\Gamma_{1}\not\subset(\Gamma_{2}\cup\cdots\cup\Gamma_{\varkappa})$. Therefore there exists a nonempty open set $U\subset\Gamma_1$ such that $\chi_1(x)=1$ for every $x\in U$. We arbitrarily  choose a function $w\in H^{q+n/2,\varphi}(\mathbb{R}^n)$ such that $\mathrm{supp}\,w \subset \alpha^{-1}_1(U)$. Turn to the operator $K$ defined by formulas \eqref{3.7}--\eqref{3.7b}. According to \eqref{3.14} with $s = q+n/2$ and owing to our assumption, we have the inclusion
\begin{equation}\label{7.7}
K(w, \underbrace{0, \ldots, 0}_{p\varkappa-1})\in H^{q+n/2,\varphi}(\Gamma, V)\subset C^q(\Gamma, V).
\end{equation}
Let us deduce from this inclusion  that $w\in C^q(\mathbb{R}^n)$.

To this end we introduce the linear mapping
$$
T_1: u \mapsto \Pi_1(\beta_1\circ(\chi_1 u)\circ\alpha_1), \quad \mbox{with} \quad u \in C^q(\Gamma, V).
$$
It acts continuously from $C^q(\Gamma, V)$ to $C^q_\mathrm{b}(\mathbb{R}^n)$. Besides, the operator $K$ acts continuously from $(H^{q+n/2,\varphi}(\mathbb{R}^n))^{p\varkappa}$ to $C^q(\Gamma, V)$ according to \eqref{3.14} with $s = q+n/2$ and our assumption.  Hence, $T_1K(w, 0, \ldots, 0) = w$; this equality is evident if additionally $w\in C_0^{\infty}(\mathbb{R}^n)$ and then extends by closure over each function $w$ chosen above. Now
$$
w = T_1K(w, 0, \ldots, 0) \in  C^q(\mathbb{R}^n)
$$
Thus, we obtain embedding~\eqref{6.1} with $G := \alpha^{-1}_1(U)$. It implies condition~\eqref{4.15} due to Proposition~\ref{p3}.
\end{proof}

\section{Proofs of properties of elliptic operators on the refined Sobolev scale}\label{sec8}

Beforehand we will prove the following result:

\begin{lemma}\label{l1}
Let $r\in \mathbb{R}$ and $L \in\Psi^r_{\mathrm{ph}}(\Gamma; V_1, V_2)$. Then the mapping $u\mapsto Lu$, with $u\in C^\infty(\Gamma, V_1)$, extends uniquely (by continuity) to a bounded linear operator
\begin{equation}\label{4.17}
L: H^{\sigma,\varphi}(\Gamma, V_1)\rightarrow H^{\sigma-r,\varphi}(\Gamma, V_2)
\end{equation}
for all $\sigma\in\mathbb{R}$ and  $ \varphi\in \mathcal{M}$.
\end{lemma}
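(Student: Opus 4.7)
The plan is to reduce the claim to the already known Sobolev case and then lift it to Hörmander spaces via interpolation with a function parameter, using Theorem~\ref{t6}.

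First, I would invoke the classical boundedness of polyhomogeneous PsDOs on vector bundles between Sobolev spaces: for every $\sigma\in\mathbb{R}$, the mapping $u\mapsto Lu$ on $C^\infty(\Gamma, V_1)$ extends uniquely by continuity to a bounded linear operator
\begin{equation*}
L: H^{\sigma}(\Gamma, V_1)\rightarrow H^{\sigma-r}(\Gamma, V_2).
\end{equation*}
This is a standard fact (see, e.g., \cite[Chapter~IV, Section~3]{Wells}) established via local trivializations that reduce the problem to the matrix-valued case on $\mathbb{R}^n$, where boundedness of polyhomogeneous PsDOs between Sobolev spaces is classical.

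Next, I fix $\sigma\in\mathbb{R}$ and $\varphi\in\mathcal{M}$, choose arbitrary $\varepsilon,\delta>0$, and define the interpolation parameter $\psi$ by formula~\eqref{3.30}. Applying the Sobolev boundedness above with $\sigma$ replaced by $\sigma-\varepsilon$ and by $\sigma+\delta$, I obtain two bounded linear operators
\begin{equation*}
L: H^{\sigma-\varepsilon}(\Gamma, V_1)\rightarrow H^{\sigma-\varepsilon-r}(\Gamma, V_2), \qquad L: H^{\sigma+\delta}(\Gamma, V_1)\rightarrow H^{\sigma+\delta-r}(\Gamma, V_2),
\end{equation*}
which are restrictions of one common linear mapping defined on $H^{\sigma-\varepsilon}(\Gamma, V_1)$. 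Since $\psi$ is an interpolation parameter (by Proposition~\ref{p1}), the interpolation property yields the bounded operator
\begin{equation*}
L: [H^{\sigma-\varepsilon}(\Gamma, V_1), H^{\sigma+\delta}(\Gamma, V_1)]_\psi \rightarrow [H^{\sigma-\varepsilon-r}(\Gamma, V_2), H^{\sigma+\delta-r}(\Gamma, V_2)]_\psi.
\end{equation*}

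Finally, I identify the source and target with Hörmander spaces using Theorem~\ref{t6}: the left-hand pair interpolates (up to equivalence of norms) to $H^{\sigma,\varphi}(\Gamma, V_1)$, while the right-hand pair, obtained by shifting $s$ by $-r$ in \eqref{3.1}, interpolates to $H^{\sigma-r,\varphi}(\Gamma, V_2)$. This gives the required bounded operator~\eqref{4.17}. Uniqueness of the extension follows because $C^\infty(\Gamma, V_1)$ is dense in $H^{\sigma,\varphi}(\Gamma, V_1)$ (as it is dense in every Sobolev space in the pair and the embedding into $X_\psi$ in \eqref{3.18} is dense). I do not anticipate any serious obstacle: the only thing to double-check is that the Sobolev-case boundedness on bundles is correctly stated and cited, and that the same linear mapping $L$ on $H^{\sigma-\varepsilon}(\Gamma, V_1)$ underlies both endpoint operators, so that Proposition on interpolation (contained in the definition of interpolation parameter) is applicable.
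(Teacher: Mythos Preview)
Your proof is correct and follows essentially the same route as the paper: invoke the known Sobolev-case boundedness of $L$ on vector bundles, apply the interpolation parameter $\psi$ from \eqref{3.30} to the two endpoint operators, and then identify the interpolated spaces via Theorem~\ref{t6}. The only cosmetic difference is that the paper fixes $\varepsilon=\delta=1$ rather than keeping them arbitrary, and cites \cite[p.~92]{Hermander87} for the Sobolev case.
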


\begin{proof}
Let $\sigma\in \mathbb{R}$ and $\varphi\in\mathcal{M}$. This lemma is known in the Sobolev case of $\varphi=1$ (see, e.g., \cite[p.~92]{Hermander87}).
Thus, the mapping $u\mapsto Lu$, with $u\in C^\infty(\Gamma, V_1)$, extends uniquely to bounded linear operators
$$
L:\,H^{\sigma\mp1}(\Gamma, V_1)\rightarrow H^{\sigma\mp1-r}(\Gamma, V_2).
$$
Using the interpolation with the function
parameter $\psi$ defined by formula \eqref{3.30} with $\varepsilon = \delta = 1$, we conclude by Theorem~\ref{t6} that the restriction of the first operator to the space $H^{s,\varphi}(\Gamma, V_1)$
is a bounded operator between the spaces
\begin{gather*}
L:\,H^{\sigma,\varphi}(\Gamma, V_1)=
\bigl[H^{\sigma-1}(\Gamma, V_1),H^{\sigma+1}(\Gamma, V_1)\bigr]_\psi\rightarrow\\
\rightarrow\bigl[H^{\sigma-r-1}(\Gamma, V_2),H^{\sigma-r+1}(\Gamma, V_2)\bigr]_\psi=
H^{\sigma-r,\varphi}(\Gamma, V_2).
\end{gather*}
\end{proof}

\begin{proof}[Proof of Theorem $\ref{t2}$]
According to Lemma~\ref{l1} the mapping $u\mapsto Au$, with $u\in C^\infty(\Gamma, V_1)$, extends by continuity to  the bounded linear operator~\eqref{4.222}. Let us prove that this operator is Fredholm. Theorem~\ref{t2} is known for Sobolev spaces, where $\varphi = 1$ (see, e.g., \cite[Theorem~19.2.1]{Hermander87}). Therefore the bounded linear operators
\begin{equation}
 A: H^{s\mp1+m}(\Gamma, V_1) \rightarrow H^{s\mp1}(\Gamma, V_2)
\end{equation}
are Fredholm with the kernel $\mathfrak{N}$, index $\dim\mathfrak{N} - \dim\mathfrak{N}^+$, and range
\begin{equation}\label{4.223}
A(H^{s\mp1+m}(\Gamma, V_1)) = \{f\in H^{s\mp1}(\Gamma, V_2): \langle f,w\rangle_{\Gamma, V_2}=0 \;\mbox{for all}\;  w\in \mathfrak{N^+}\}.
\end{equation}
Using the interpolation with the function parameter $\psi$ defined by formula \eqref{3.30} with $\varepsilon = \delta = 1$, we
conclude by Proposition~\ref{p4}  that the bounded operator
$$
A: [H^{s-1+m}(\Gamma, V_1),  H^{s+1+m}(\Gamma, V_1)]_\psi \rightarrow [H^{s-1}(\Gamma, V_2),  H^{s+1}(\Gamma, V_2)]_\psi
$$
is also Fredholm. According to Proposition~\ref{p1} this operator coincides with \eqref{4.222}. Moreover, owing to  Proposition~\ref{p4}, the kernel of the Fredholm operator~\eqref{4.222} equals  $\mathfrak{N}$, the index equals $\dim\mathfrak{N} - \dim\mathfrak{N}^+$, and  the range is
$$
H^{s, \varphi}(\Gamma, V_2) \cap A(H^{s-1+m}(\Gamma, V_1))= \{f\in H^{s,\varphi}(\Gamma, V_2): \langle f,w\rangle_{\Gamma, V_2}=0 \;\mbox{for all}\;  w\in \mathfrak{N^+}\}
$$
in view of \eqref{4.223}.
\end{proof}

\begin{proof}[Proof of Theorem $\ref{t7}$]
Owing to Theorem~\ref{t6}, $\mathfrak{N}$ is the kernel and  $P^+(H^{s,\varphi}(\Gamma, V_2))$ is the range  of the operator~\eqref{4.222}. Hence, the restriction of \eqref{4.222} to the subspace $P(H^{s+m,\varphi}(\Gamma, V_1))$ is the bijective linear bounded operator~\eqref{4.235}. This operator is an isomorphism by the Banach theorem on inverse operator.
\end{proof}

\begin{proof}[Proof of Theorem $\ref{t3}$]
The global estimate \eqref{5.1} is a direct consequence of Theorem~\ref{t2} and Peetre's lemma \cite[p.~728, Lemma~3]{Peetre61}. (Of course, in \eqref{5.1} we may take not only $\sigma<s+m$ but also arbitrary $\sigma\in\mathbb{R}$.) We will deduce $\eqref{f4}$ from $\eqref{5.1}$. Beforehand, let us prove the following result: for each integer $r\geq1$ and for arbitrary functions $\chi,\eta$ from Theorem~\ref{t3} there exists a number $c>0$ such that
\begin{equation}\label{5.6}
\|\chi u\|_{s+m,\varphi;\Gamma, V_1}\leq c\,\bigl(\|\eta Au\|_{s,\varphi;\Gamma, V_2}+\|\eta u\|_{s+m-r,\varphi;\Gamma, V_1}+\|u\|_{\sigma;\Gamma, V_1}\bigr)
\end{equation}
for every $u\in H^{s+m,\varphi}(\Gamma, V_1)$.

According to \eqref{5.1} there exists a number $c_0>0$ such that
\begin{equation}\label{5.3}
\|\chi u\|_{s+m,\varphi;\Gamma, V_1}\leq c_0\,\bigl(\|A(\chi u)\|_{s,\varphi;\Gamma, V_2}+\|\chi u\|_{\sigma;\Gamma, V_1}\bigr)
\end{equation}
for arbitrary $u\in H^{s+m,\varphi}(\Gamma, V_1)$.
Rearranging the PsDO $A$ and the
operator of the multiplication by $\chi$, we arrived at the formula
\begin{equation}\label{5.7}
A(\chi u) = A(\chi\eta u) = \chi A(\eta u) + A'(\eta u) = \chi Au + \chi A((\eta-1) u) + A'(\eta u).
\end{equation}
Here, $A'$ is a certain PsDO from $\Psi_{\mathrm{ph}}^{m-1}(\Gamma; V_1,V_2)$ (see, e.g., \cite[p.~13]{Agranovich94}), and the PsDO $u\mapsto \chi A((\eta-1) u)$ belongs to each class $\Psi_{\mathrm{ph}}^{\lambda}(\Gamma; V_1,V_2)$ with $\lambda\in\mathbb{R}$ because $\mathrm{supp}\, \chi \cap \mathrm{supp}(\eta-1) = \emptyset$. Therefore, owing to Lemma~\ref{l1}, we obtain the inequalities
\begin{equation}\label{5.8}
\begin{aligned}
\|A(\chi u)\|_{s,\varphi;\Gamma,V_2}
&\leq \|\chi Au\|_{s,\varphi;\Gamma,V_2} + \|\chi A((\eta-1) u)\|_{s,\varphi;\Gamma,V_2} + \|A'(\eta u)\|_{s,\varphi;\Gamma,V_2}\\
&\leq \|\chi Au\|_{s,\varphi;\Gamma,V_2} + c_1\|u\|_{\sigma-1,\varphi;\Gamma,V_1} + c_2 \|\eta u\|_{s+m-1,\varphi;\Gamma,V_1}\\
&\leq \|\chi Au\|_{s,\varphi;\Gamma,V_2} + c_1c_3\|u\|_{\sigma;\Gamma,V_1} + c_2 \|\eta u\|_{s+m-1,\varphi;\Gamma,V_1}.
\end{aligned}
\end{equation}
Here, $c_1$ is the norm of the operator $u\mapsto \chi A((\eta-1) u)$ that acts continuously from $H^{\sigma-1,\varphi}(\Gamma, V_1)$ to $H^{s, \varphi}(\Gamma, V_2)$, and $c_2$ is the norm of the bounded operator $A'$ from $H^{s+m-1, \varphi}(\Gamma, V_2)$ to $H^{s,\varphi}(\Gamma, V_1)$. Besides, $c_3$ is the norm of the operator of the continuous embedding $H^{\sigma}(\Gamma, V_1)\hookrightarrow H^{\sigma-1,\varphi}(\Gamma, V_1)$.

Formulas \eqref{5.3} and \eqref{5.8} yield the inequalities
\begin{equation}\label{5.9}
\begin{aligned}
\|\chi u\|_{s+m,\varphi;\Gamma,V_1}&\leq c_0
\bigl(
\|\chi Au\|_{s,\varphi;\Gamma,V_2} + c_1c_3\|u\|_{\sigma;\Gamma,V_1} + c_2 \|\eta u\|_{s+m-1,\varphi;\Gamma,V_1}+\|\chi u\|_{\sigma;\Gamma,V_1}
\bigr)\\
&\leq c_0
\bigl(
\|\chi Au\|_{s,\varphi;\Gamma,V_2} + c_1c_3\|u\|_{\sigma;\Gamma,V_1} + c_2 \|\eta u\|_{s+m-1,\varphi;\Gamma,V_1} + c_4\|u\|_{\sigma;\Gamma,V_1}
\bigr).
\end{aligned}
\end{equation}
Here, $c_4$ is the norm of the bounded operator $u\mapsto\chi u$ on the space $H^{\sigma}(\Gamma,V_1)$. Note that
\begin{equation*}
\|\chi Au\|_{s,\varphi;\Gamma,V_2}=
\|\chi\eta Au\|_{s,\varphi;\Gamma,V_2}\leq
\widetilde{c}\,\|\eta Au\|_{s,\varphi;\Gamma,V_2},
\end{equation*}
with $\widetilde{c}$ being the norm of the bounded operator $v\mapsto\chi v$ on the space $H^{s,\varphi}(\Gamma,V_2)$. Thus, we have proved \eqref{5.6} for $r = 1$.

Choose an integer $k\geq1$ arbitrarily and assume that \eqref{5.6} is true for $r=k$. Let us prove that \eqref{5.6} is also true for $r=k+1$. We choose a function $\eta_1\in C^\infty(\Gamma)$ such that $\eta_1=1$ in a neighbourhood of $\mathrm{supp}\,\chi$ and that $\eta=1$ in a neighbourhood of $\mathrm{supp}\,\eta_1$.  According to our assumption, there exists a number $c_5>0$ such that
\begin{equation}\label{5.10}
\|\chi u\|_{s+m,\varphi;\Gamma, V_1}\leq c_5\,\bigl(\|\eta_1 Au\|_{s,\varphi;\Gamma, V_2}+\|\eta_1 u\|_{s+m-k,\varphi;\Gamma, V_1}+\|u\|_{\sigma;\Gamma, V_1}\bigr)
\end{equation}
for arbitrary $u\in H^{s+m,\varphi}(\Gamma,V_1)$. Owing to \eqref{5.1} we write
\begin{equation}\label{5.11}
\|\eta_1 u\|_{s+m-k,\varphi;\Gamma,V_1}\leq c_6
\bigl(
\|A(\eta_1 u)\|_{s-k,\varphi;\Gamma,V_2} + \|\eta_1 u\|_{\sigma;\Gamma,V_1}
\bigr);
\end{equation}
here, $c_6$ is a certain positive number that does not depend on $u$.
Rearranging the PsDO $A$ and the
operator of the multiplication by $\eta_1$, we obtain
\begin{equation}\label{5.12}
A(\eta_1 u) = A(\eta_1\eta u) = \eta_1 A(\eta u) + A'_1(\eta u) = \eta_1 Au + \eta_1 A((\eta-1) u) + A'_1(\eta u).
\end{equation}
Here, $A'_1$ is a certain PsDO from $\Psi_{\mathrm{ph}}^{m-1}(\Gamma; V_1,V_2)$, and the PsDO $u\mapsto\eta_1 A((\eta-1) u)$ belongs to each class $\Psi_{\mathrm{ph}}^{\lambda}(\Gamma; V_1,V_2)$ with $\lambda\in\mathbb{R}$ because $\mathrm{supp}\, \eta_1 \cap \mathrm{supp} (\eta-1) = \emptyset$. Therefore, owing to Lemma~\ref{l1}, we obtain the inequalities
\begin{equation}\label{5.13}
\begin{aligned}
\|A(\eta_1 u)\|_{s-k,\varphi;\Gamma,V_2}
&\leq \|\eta_1 Au\|_{s-k,\varphi;\Gamma,V_2} + \|\eta_1 A((\eta-1) u)\|_{s-k,\varphi;\Gamma,V_2} +
\|A'_1(\eta u)\|_{s-k,\varphi;\Gamma,V_2}\\
&\leq \|\eta_1 Au\|_{s-k,\varphi;\Gamma,V_2} + c_7\|u\|_{\sigma-1,\varphi;\Gamma,V_1} +
c_8\|\eta u\|_{s-k+m-1,\varphi;\Gamma,V_1}\\
&\leq \|\eta_1 Au\|_{s-k,\varphi;\Gamma,V_2} + c_7c_3\|u\|_{\sigma;\Gamma,V_1} +
c_8\|\eta u\|_{s+m-(k+1),\varphi;\Gamma,V_1}.
\end{aligned}
\end{equation}
Here, $c_7$ is the norm of the operator $u\mapsto \eta_1 A((\eta-1) u)$ that acts continuously from $H^{\sigma-1,\varphi}(\Gamma, V_1)$ to $H^{s-k, \varphi}(\Gamma, V_2)$, and $c_8$ is the norm of the bounded operator $A'_1$ from $H^{s-k+m-1, \varphi}(\Gamma, V_2)$ to $H^{s-k,\varphi}(\Gamma, V_1)$.

Now formulas \eqref{5.10}, \eqref{5.11}, and \eqref{5.13} yield the inequalities
\begin{equation}\label{5.17}
\begin{aligned}
\|\chi u\|_{s+m,\varphi;\Gamma, V_1}
&\leq c_5\,\Bigl(\|\eta_1 Au\|_{s,\varphi;\Gamma, V_2}+c_6
\bigl(
\|A(\eta_1 u)\|_{s-k,\varphi;\Gamma,V_2} + \|\eta_1 u\|_{\sigma;\Gamma,V_1}
\bigr) + \|u\|_{\sigma;\Gamma, V_1}\Bigr)
\\&\leq c_5
\Bigl(
\|\eta_1 Au\|_{s,\varphi;\Gamma,V_2} +
c_6\bigl(\|\eta_1 Au\|_{s-k,\varphi;\Gamma,V_2} + c_7c_3\|u\|_{\sigma;\Gamma,V_1}
\\&\quad\quad\quad\quad\quad\quad\quad\quad\quad+ c_8\|\eta u\|_{s+m-(k+1),\varphi;\Gamma,V_1}
\bigr)
+ \|u\|_{\sigma;\Gamma,V_1}
\Bigr).
\end{aligned}
\end{equation}
Since $\eta_1=\eta_1\eta$, we have
\begin{equation}\label{5.18}
\begin{aligned}
\|\eta_1 Au\|_{s,\varphi;\Gamma,V_2} + c_6\|\eta_1 Au\|_{s-k,\varphi;\Gamma,V_2} &\leq
(1+c_6)\|\eta_1 \eta Au\|_{s,\varphi;\Gamma,V_2} \\&\leq
(1+c_6)c_{9}\|\eta Au\|_{s,\varphi;\Gamma,V_2}.
\end{aligned}
\end{equation}
Here, $c_9$ is the norm of the bounded operator $v\mapsto \eta_1 v$ on the space $H^{s,\varphi}(\Gamma, V_2)$.
Now formulas \eqref{5.17} and \eqref{5.18} give the inequality \eqref{5.6} with $r=k+1$.
Owing to the principle of mathematical induction, this inequality is true for each integer $r\geq1$.

The required estimate \eqref{f4} follows from the inequality \eqref{5.6}, where $r\in\mathbb{Z}$ such that $s+m-r<\sigma$, in  view of
\begin{align*}
\|\eta u\|_{s+m-r,\varphi;\Gamma, V_1}\leq
c_{10}\|\eta u\|_{\sigma;\Gamma, V_1}\leq
c_{10}c_{11}\|u\|_{\sigma;\Gamma, V_1}.
\end{align*}
Here, $c_{10}$ is the norm of the embedding operator $H^{s+m-r,\varphi}(\Gamma, V_1) \hookrightarrow H^{\sigma}(\Gamma, V_1)$, and $c_{11}$ is the norm of the operator $u \mapsto \eta u$ on the space $H^{\sigma}(\Gamma, V_1)$
\end{proof}

As to Remark~\ref{r1} note that inequality \eqref{5.2} follows from \eqref{5.9} with $\sigma<s+m-1$ in view of Theorem~\ref{t8}.

\begin{proof}[Proof of Theorem $\ref{t5}$]
Since $u\in H^{-\infty}(\Gamma, V_1)$, there exists an integer $r\geq0$ such that
$u\in H^{s+m-r,\varphi}(\Gamma, V_1)$. Let us first prove this theorem in the global case where $\Gamma_0=\Gamma$. In this case, $$
Au=f\in H^{s,\varphi}(\Gamma, V_2)\cap
A(H^{s+m-r,\varphi}(\Gamma, V_1))=
A(H^{s+m,\varphi}(\Gamma, V_1))
$$
by the condition and Theorem~\ref{t2}.
Hence,  there exists a section $v\in H^{s+m,\varphi}(\Gamma, V_1)$ such that $Av=f$ on $\Gamma$.
Since $A(u-v)=0$ on $\Gamma$ and $u-v \in H^{s+m-r,\varphi}(\Gamma, V_1)$, we conclude by Theorem~\ref{t6} that
$$
w:=u-v \in \mathfrak{N}\subset C^\infty(\Gamma, V_1).
$$
Thus,
$$
u=v+w\in H^{s+m,\varphi}(\Gamma, V_1).
$$
Theorem~\ref{t5} is proved in the case of $\Gamma_0=\Gamma$.

We now deduce this theorem in the general situation from the case just considered.
Beforehand, let us prove that for every integer $k\geq 1$ the following implication holds for $u$:
\begin{equation}\label{4.13}
u\in H_{\mathrm{loc}}^{s+m-k,\varphi}(\Gamma_0, V_1)\; \Rightarrow \;
u \in H_{\mathrm{loc}}^{s+m-k+1,\varphi}(\Gamma_0, V_1).
\end{equation}
Assume that $u\in H_{\mathrm{loc}}^{s+m-k,\varphi}(\Gamma_0, V_1)$. We  arbitrarily choose a function $\chi\in C^{\infty}(\Gamma)$ such that  $\mathrm{supp}\,\chi\subset \Gamma_0$. Let a function $\eta\in C^{\infty}(\Gamma)$ satisfy the conditions
$\mathrm{supp}\,\eta\subset \Gamma_0$ and $\eta=1$ in a neighbourhood of $\mathrm{supp}\,\chi$. According to \eqref{5.7} we have the equality
\begin{equation}\label{5.21}
A(\chi u) = \chi f + \chi A((\eta-1) u) + A'(\eta u).
\end{equation}
Here, $\chi f \in H^{s,\varphi}(\Gamma, V_2)$ by the condition; $\chi A((\eta-1) u)\in H^{s,\varphi}(\Gamma, V_2)$ because $u\in H^{s+m-r,\varphi}(\Gamma, V_1)$ and the PsDO $u\rightarrow \chi A((\eta-1) u)$ belongs to $\Psi_{\mathrm{ph}}^{m-r}(\Gamma; V_1,V_2)$, and $A'(\eta u)\in H^{s-k+1,\varphi}(\Gamma, V_2)$ because $\eta u \in H^{s+m-k,\varphi}(\Gamma,V_1)$ by our assumption and because the inclusion $A'\in\Psi_{\mathrm{ph}}^{m-1}(\Gamma; V_1,V_2)$. Hence, the right-hand side of equality \eqref{5.21} belongs to $H^{s-k+1,\varphi}(\Gamma, V_2)$. Therefore $\chi u\in H^{s+m-k+1,\varphi}(\Gamma, V_1)$ by what we have proved in the previous paragraph. Thus,  $u \in H_{\mathrm{loc}}^{s+m-k+1,\varphi}(\Gamma_0, V_1)$ in view of our choice of $\chi$. Implication \eqref{4.13} is proved.

Applying this implication successively for $k=r,\,r-1,\ldots,1$, we conclude that
\begin{gather*}
u\in H^{s+m-r,\varphi}(\Gamma, V_1)\subset
H_{\mathrm{loc}}^{s+m-r,\varphi}(\Gamma_0, V_1)\Rightarrow\\
\Rightarrow u \in H_{\mathrm{loc}}^{s+m-r+1,\varphi}(\Gamma_0, V_1)
\Rightarrow \ldots\Rightarrow u \in
H_{\mathrm{loc}}^{s+m,\varphi}(\Gamma_0, V_1).
\end{gather*}
Thus, we have proved the required inclusion $u\in H_{\mathrm{loc}}^{s+m,\varphi}(\Gamma_0, V_1)$.
\end{proof}

\begin{proof}[Proof of Theorem $\ref{t4}$]
Owing to Theorem~\ref{t5} where $s:=q-m+n/2$ we have the inclusion $u\in H_{\mathrm{loc}}^{q+n/2,\varphi}(\Gamma_0, V_1)$. We arbitrarily choose a function $\chi\in C^{\infty}(\Gamma)$ such that $\mathrm{supp}\,\chi\subset \Gamma_0$. Then
$$
\chi u \in H^{q+n/2,\varphi}(\Gamma, V_1) \hookrightarrow C^q(\Gamma, V_1)
$$
due to condition \eqref{4.15} and Theorem~\ref{t10}. Therefore $u \in C^q(\Gamma_0, V_1)$.
\end{proof}

\begin{proof}[Proof of Remark $\ref{r2}$]
If condition~\eqref{4.15} is satisfied, then we have implication~\eqref{4.16} according to Theorem~\ref{t4}.
Assume now that this implication is valid and prove that $\varphi$ satisfies condition~\eqref{4.15}.
Without loss of generality we may suppose that $\Gamma_0\cap\Gamma_1\neq\emptyset$.  We choose a nonempty open set $U\subset\Gamma_0\cap\Gamma_1$ and a function $\chi\in C^{\infty}(\Gamma)$ such that  $\mathrm{supp}\,\chi \subset \Gamma_0$ and $\chi=1$ on $U$. Turn to the operator $K$ defined by formulas \eqref{3.7}--\eqref{3.7b}, where $\beta_j := \beta_{1,j}$ and the function $\eta_1$ additionally satisfies the equality $\eta_1= 1$ on the set $\alpha_{1}^{-1}(U)$.
We arbitrarily  choose a function $w\in H^{q+n/2,\varphi}(\mathbb{R}^n)$ such that $\mathrm{supp}\,w \subset \alpha^{-1}_1(U)$.
Then
\begin{equation*}
u := K(w, \underbrace{0, \ldots, 0}_{p\varkappa-1})\in H^{q+n/2,\varphi}(\Gamma, V_1)
\end{equation*}
according to \eqref{3.14} with $s := q+n/2$ and $V:=V_1$. The premise of implication~\eqref{4.16} holds true for the section $u$. Hence, $u\in C^q_{\mathrm{loc}}(\Gamma_0, V_1)$ according to this implication. Therefore $u = \chi u\in C^q(\Gamma, V_1)$ due to our choice of $\chi$.
Let us use the operator $T_1$, with $\beta_1 := \beta_{1,1}$, introduced in the proof of Theorem~\ref{t10}. Owing to the properties of $T_1$ mentioned therein, we write
$$
w = T_1K(w, 0, \ldots, 0) \in  C^q(\mathbb{R}^n).
$$
Thus, we obtain embedding~\eqref{6.1} with $G := \alpha^{-1}_1(U)$. It implies condition~\eqref{4.15} by Proposition~\ref{p3}.
\end{proof}

\textit{The author is grateful to A. A. Murach for his big help in preparing of the paper. The author thanks Referees for their remarks and suggestions about improving the language of the paper.}

\end{document}